\numberwithin{equation}{section}
\numberwithin{figure}{section}
\theoremstyle{plain}
\newtheorem{thm}{\protect\theoremname}[section]
  \theoremstyle{plain}
  \newtheorem{prop}[thm]{\protect\propositionname}
  \theoremstyle{plain}
  \newtheorem{lem}[thm]{\protect\lemmaname}
\def\R{\mathbb R}
\def\wq{\infty}
\def\al{\alpha}
\def\be{\beta}
\def\ga{\gamma}
\def\de{\delta}
\def\ep{\epsilon}
\def\om{\omega}
\def\na{\nabla}
\def\Ga{\Gamma}
\def\Om{\Omega}
\def\De{\Delta}
\def\pa{\partial}
\newcommand\esssup{{\rm \,esssup\,}}
\numberwithin{equation}{section}
\theoremstyle{definition}
  \providecommand{\lemmaname}{Lemma}
  \providecommand{\propositionname}{Proposition}
\providecommand{\theoremname}{Theorem}
\begin{document}

\title{\title[Asymptotic Behavior Of Solutions To  Quasilinear Equations]AA
Note on Asymptotic Behaviors Of Solutions To Quasilinear Elliptic
Equations with Hardy Potential}

\author{Cheng-Jun He and Chang-Lin Xiang}

\date{\today}

\address{[Cheng-Jun He] Wuhan Institute of Physics and Mathematics, Chinese
Academy of Sciences, P.O. Box 71010, Wuhan, 430071, P. R. China.}

\email{[Cheng-Jun He] cjhe@wipm.ac.cn}

\address{[Chang-Lin Xiang] University of Jyvaskyla, Department of Mathematics
and Statistics, P.O. Box 35, FI-40014 University of Jyvaskyla, Finland.}

\email{[Chang-Lin Xiang] changlin.c.xiang@jyu.fi}
\begin{abstract}
Optimal estimates on asymptotic behaviors of weak solutions both at
the origin and at the infinity are obtained to the following quasilinear
elliptic equations
\begin{eqnarray*}
-\Delta_{p}u-\frac{\mu}{|x|^{p}}|u|^{p-2}u+m|u|^{p-2}u=f(u), &  & x\in\R^{N},
\end{eqnarray*}
where $1<p<N$, $0\leq\mu<\left((N-p)/p\right)^{p}$, $m>0$ and $f$
is a continuous function.
\end{abstract}

\maketitle
{\small
\noindent {\bf Keywords:} Quasilinear elliptic equations; Hardy's inequality;  Asymptotic behaviors; Comparison principle
\smallskip
\newline\noindent {\bf 2010 Mathematics Subject Classification: } 35B40 35J70

\tableofcontents{}

\section{Introduction and main results}

In this note, we study asymptotic behaviors of weak solutions to the
following quasilinear elliptic equations
\begin{eqnarray}
-\Delta_{p}u-\frac{\mu}{|x|^{p}}|u|^{p-2}u+m|u|^{p-2}u=f(u), &  & x\in\R^{N},\label{eq: Object}
\end{eqnarray}
where $1<p<N$, $0\leq\mu<\bar{\mu}=\left((N-p)/p\right)^{p}$, $m>0$,
\begin{eqnarray*}
\Delta_{p}u=\sum_{i=1}^{N}\partial_{x_{i}}(|\nabla u|^{p-2}\partial_{x_{i}}u), &  & \nabla u=(\partial_{x_{1}}u,\cdots,\partial_{x_{N}}u),
\end{eqnarray*}
is the $p$-Laplacian operator and $f:\R\to\R$ is a continuous function
denoted by $f\in C(\R)$. In addition, we assume throughout the paper
that $f$ satisfies that
\begin{equation}
\limsup_{t\to0}\frac{|f(t)|}{|t|^{q-1}}\le A<\wq\label{eq: condition of f at origin}
\end{equation}
for some $q>p,$ and that
\begin{equation}
\limsup_{|t|\to\wq}\frac{|f(t)|}{|t|^{p^{*}-1}}\le A<\wq\label{eq: condition of f at infiinity}
\end{equation}
with $p^{*}=Np/(N-p)$, where $A>0$ is a constant.

Equation (\ref{eq: Object}) is the Euler-Lagrange equation of the
energy functional $E:W^{1,p}(\R^{N})\to\R$ defined by
\begin{eqnarray*}
E(u)=\frac{1}{p}\int_{\R^{N}}\left(|\na u|^{p}-\frac{\mu}{|x|^{p}}|u|^{p}+m|u|^{p}\right)-\int_{\R^{N}}F(u), &  & u\in W^{1,p}(\R^{N}),
\end{eqnarray*}
where $F$ is given by $F(t)=\int_{0}^{t}f$ for $t\in\R$ and $W^{1,p}(\R^{N})$
is the Banach space of weakly differentiable functions $u:\R^{N}\to\R$
such that the norm
\[
\|u\|_{1,p,\R^{N}}=\left(\int_{\R^{N}}|u|^{p}\right)^{\frac{1}{p}}+\left(\int_{\R^{N}}|\na u|^{p}\right)^{\frac{1}{p}}
\]
is finite.

All of the integrals in energy functional $E$ are well defined, due
to the Sobolev inequality
\begin{eqnarray*}
C\left(\int_{\mathbb{R}^{N}}|\varphi|^{p^{*}}\right)^{\frac{p}{p^{*}}}\leq\int_{\mathbb{R}^{N}}|\nabla\varphi|^{p}, &  & \forall\:\varphi\in W^{1,p}(\mathbb{R}^{N}),
\end{eqnarray*}
where $C=C(N,p)>0$, and due to the Hardy inequality (see \cite[Lemma 1.1]{B})
\begin{eqnarray}
\left(\frac{N-p}{p}\right)^{p}\int_{\mathbb{R}^{N}}\frac{|\varphi|^{p}}{|x|^{p}}\leq\int_{\mathbb{R}^{N}}|\nabla\varphi|^{p}, &  & \forall\:\varphi\in W^{1,p}(\mathbb{R}^{N}),\label{eq: Hardy inequality}
\end{eqnarray}
and due to  the assumptions (\ref{eq: condition of f at origin})
and (\ref{eq: condition of f at infiinity}), which imply that
\begin{eqnarray*}
|F(t)|\le C|t|^{p}+C|t|^{p^{*}}, &  & \forall\, t\in\R,
\end{eqnarray*}
for some positive constant $C$.

We say that $u\in W^{1,p}(\R^{N})$ is a weak subsolution of equation
(\ref{eq: Object}), if for every nonnegative function $\varphi\in C_{0}^{\infty}(\mathbb{R}^{N})$,
the space of smooth functions in $\R^{N}$ with compact support, there
holds
\[
\int_{\mathbb{R}^{N}}\left(|\nabla u|^{p-2}\nabla u\cdot\nabla\varphi-\frac{\mu}{|x|^{p}}|u|^{p-2}u\varphi+m|u|^{p-2}u\varphi\right)\leq\int_{\mathbb{R}^{N}}f(u)\varphi.
\]
A function $u\in W^{1,p}(\R^{N})$ is a weak supersolution of equation
(\ref{eq: Object}), if for every nonnegative function $\varphi\in C_{0}^{\infty}(\mathbb{R}^{N})$,
there holds
\[
\int_{\mathbb{R}^{N}}\left(|\nabla u|^{p-2}\nabla u\cdot\nabla\varphi-\frac{\mu}{|x|^{p}}|u|^{p-2}u\varphi+m|u|^{p-2}u\varphi\right)\ge\int_{\mathbb{R}^{N}}f(u)\varphi.
\]
A function $u\in W^{1,p}(\R^{N})$ is a weak solution of equation
(\ref{eq: Object}) if it is both a weak subsolution and a weak supersolution.

Equation (\ref{eq: Object}) and its variants have been studied extensively.
For the existence and the nonexistence of weak solutions to equation
(\ref{eq: Object}), we refer to e.g. \cite{Lions1983,Be.Lions1983-2,B}.
In this note, we study the asymptotic behaviors of weak solutions
to equation (\ref{eq: Object}). In the following we study the asymptotic
behaviors of positive radial weak solutions and general weak solutions
separately.

\subsection{Asymptotic behaviors of positive radial solutions}

In the case when $\mu=0$, equation (\ref{eq: Object}) is reduced
to
\begin{eqnarray}
-\Delta_{p}u+m|u|^{p-2}u=f(u), &  & \text{in }\R^{N}.\label{eq: mu=00003D0}
\end{eqnarray}
When $p=2$, Gidas, Ni and Nirenberg \cite{GNN1981} proved that if
$u$ is a positive $C^{2}$ solution (not necessarily in $W^{1,2}(\R^{N})$)
to equation (\ref{eq: mu=00003D0}) satisfying
\begin{eqnarray}
u(x)\to0 &  & \text{as }|x|\to\wq,\label{eq:ground state condition}
\end{eqnarray}
and if $f\in C^{1+\al}$ for some $\al>0$, then $u$ must be radially
symmetric with respect to a point $x_{0}\in\R^{N}$ and
\begin{equation}
\lim_{|x|\rightarrow\infty}u(x)|x-x_{0}|^{\frac{N-1}{2}}e^{\sqrt{m}|x-x_{0}|}=C\label{eq: estimate for p equals  2}
\end{equation}
for a constant $0<C<\wq$. In fact, the above mentioned result holds
under more general assumptions on $f$. We refer the reader to \cite[Theorem 2]{GNN1981}.
When $1<p<N$, Li and Zhao \cite{LZ2005} proved that if $u$ is a
positive radial $C^{1}$ distribution solution of equation (\ref{eq: mu=00003D0})
satisfying (\ref{eq:ground state condition}), then
\begin{equation}
\lim_{|x|\rightarrow\infty}u(x)|x|^{\frac{N-1}{p(p-1)}}e^{\left(\frac{m}{p-1}\right)^{\frac{1}{p}}|x|}=C\label{eq: estimate for general p at infinity-1}
\end{equation}
for a constant $0<C<\wq$.

In the case when $\mu\ne0$, Deng and Gao \cite{Deng2009} studied
equation (\ref{eq: Object}) with $p=2$, $m=1$ and $f(u)=|u|^{\al-2}u$,
$2<\al<2^{*}$, that is,
\begin{eqnarray}
-\De u-\frac{\mu}{|x|^{2}}u+u=|u|^{\al-2}u &  & \text{in }\R^{N},\label{eq: Deng and Gao}
\end{eqnarray}
where $N\ge3$ and $0\le\mu\le3\bar{\mu}/4$.

Let $u(x)$ be a positive radial solution to equation (\ref{eq: Deng and Gao}).
If $u$ belongs to $W^{1,2}(\R^{N})$, Theorem 1.1 of \cite{Deng2009}
gives the following asymptotic behavior of $u$ at the origin
\begin{equation}
\lim\limits _{|x|\rightarrow0}u(x)|x|^{\sqrt{\bar{\mu}}-\sqrt{\bar{\mu}-\mu}}=C,\label{eq: asym. beha. at origin-1}
\end{equation}
for a constant $0<C<\infty$. Theorem 1.1 of \cite{Deng2009} also
gives the following asymptotic behavior of $u$ at the infinity
\[
\lim_{|x|\rightarrow\infty}u(x)|x|^{\frac{N-1}{2}}e^{|x|}=C
\]
for a constant $0<C<\infty$, provided that hypothesis (\ref{eq:ground state condition})
holds. For more precise result on the asymptotic behavior of $u$
at infinity, we refer the reader to \cite[Theorem 1.1]{Deng2009}.

Note that Theorem 1.1 of \cite{Deng2009} dose not give estimates
on the asymptotic behaviors of positive radial solutions to equation
(\ref{eq: Deng and Gao}) for $3\bar{\mu}/4<\mu<\bar{\mu}$. In the
general case when $\mu\ne0$ and $1<p<N$, the asymptotic behaviors
of positive radial solutions to equation (\ref{eq: Object}) are either
unknown.

In this note, we study the asymptotic behaviors of positive radial
weak solutions to equation (\ref{eq: Object}) for the full range
of parameters $p$ and $\mu$, that is, $1<p<N$ and $0\le\mu<\bar{\mu}$.
We have the following estimate for positive radial weak solutions
at the origin.
\begin{thm}
\label{thm: Asymptotic behavior of GS} Assume that $m>0$, $0\le\mu<\bar{\mu}=\left((N-p)/p\right)^{p}$
and that $f\in C(\R)$ satisfies (\ref{eq: condition of f at origin})
and (\ref{eq: condition of f at infiinity}). Let $u\in W^{1,p}(\R^{N})$
be a positive radial weak solution of equation (\ref{eq: Object}).
Then there exists $\ga_{1}\in[0,(N-p)/p)$ such that
\[
\lim\limits _{|x|\rightarrow0}u(x)|x|^{\gamma_{1}}=C
\]
for a constant $0<C<\infty$.
\end{thm}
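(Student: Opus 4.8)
The plan is to obtain the asymptotic exponent $\gamma_1$ by comparing $u$ near the origin with explicit power-type sub- and super-solutions of the corresponding pure Hardy operator, exploiting that the perturbation terms $m|u|^{p-2}u$ and $f(u)$ are lower-order near $x=0$. First I would reduce the PDE to a radial ODE: writing $u=u(r)$ with $r=|x|$, the equation becomes
\[
-\big(r^{N-1}|u'|^{p-2}u'\big)' - \mu r^{N-1-p}|u|^{p-2}u + m r^{N-1}|u|^{p-2}u = r^{N-1}f(u).
\]
Using the interior regularity theory for the $p$-Laplacian (the equation is uniformly elliptic away from the origin since $u>0$), $u$ is $C^{1,\alpha}$ on $\R^N\setminus\{0\}$, and since $u\in W^{1,p}(\R^N)$ one has $u(r)\to 0$ or $u(r)\to\ell$ as $r\to 0$; I would first rule out unboundedness and then split into the two cases $\limsup_{r\to 0}u(r)<\infty$ and $=\infty$.

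The key model computation is that the function $r\mapsto r^{-\gamma}$ solves $-\Delta_p w - \mu|x|^{-p}w^{p-1}=0$ precisely when $\gamma(p-1)$ satisfies the characteristic equation
\[
\big((N-1)-\gamma(p-1)\big)\,\gamma^{p-1}|\gamma|^{?}\; =\; \mu\,\gamma^{p-1},
\]
i.e. after simplification $|\gamma|^{p-2}\gamma\,\big(N-p-\gamma(p-1)\big)=\mu\,|\gamma|^{p-2}\gamma$ — equivalently $h(\gamma):=\gamma^{p-1}(N-p-(p-1)\gamma)=\mu$ for $\gamma\in[0,(N-p)/(p-1))$. On this interval $h$ increases from $0$ to its maximum and then decreases to $0$ at the endpoint; since $0\le\mu<\bar\mu=((N-p)/p)^p=h((N-p)/(p(p-1))\cdot\text{(appropriate value)})$, there is a unique root $\gamma_1$ in the lower branch $[0,(N-p)/p)$. (When $\mu=0$ this gives $\gamma_1=0$.) Thus $|x|^{-\gamma_1}$ is the relevant singular homogeneous solution. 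To turn this into a two-sided bound for $u$, I would fix a small ball $B_\rho$, and on it use that the extra terms satisfy $|m u^{p-1}-f(u)|\le C u^{p-1}$ with $u$ small, so for any $\varepsilon>0$ the functions $w_\pm = (C_\pm)|x|^{-\gamma_1\mp\varepsilon}$ are, respectively, a super- and sub-solution of the full equation on $B_\rho\setminus\{0\}$ once the constants are chosen to dominate/be-dominated by $u$ on $\partial B_\rho$; the weak comparison principle for the operator $-\Delta_p\,\cdot\,-\mu|x|^{-p}|\cdot|^{p-2}(\cdot)+(\text{absorbed zero-order term})$ — valid because the coercivity $\mu<\bar\mu$ controls the Hardy term, cf. \eqref{eq: Hardy inequality} — then yields $c|x|^{-\gamma_1+\varepsilon}\le u(x)\le C|x|^{-\gamma_1-\varepsilon}$ near $0$. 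Letting $\varepsilon\to0$ gives the order $|x|^{-\gamma_1}$; to upgrade the two-sided bound to the existence of the limit $\lim u(x)|x|^{\gamma_1}=C\in(0,\infty)$, I would pass to the variable $v(t)=u(e^{-t})e^{-\gamma_1 t}$ (Emden–Fowler type change $t=-\log r$), show that the resulting ODE for $v$ is an autonomous equation plus a perturbation that is integrable in $t$ near $t=+\infty$ (because $f(u)$ and $mu^{p-1}$ contribute exponentially decaying corrections thanks to $q>p$ and the already-established bounds on $u$), and conclude that $v(t)$ and $v'(t)$ converge; monotonicity/energy arguments for this one-dimensional problem, or a direct integration of $(r^{N-1}|u'|^{p-2}u')'$ against the right factor, force $v'(t)\to0$ and $v(t)\to C$ with $C>0$.

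The main obstacle I expect is the comparison-principle step in the degenerate/singular setting: the $p$-Laplacian is not linear, so one must use the correct weak comparison principle on the punctured ball $B_\rho\setminus\{0\}$ with the singular Hardy weight, checking that the competitor functions $|x|^{-\gamma_1\pm\varepsilon}$ genuinely belong to the right weighted Sobolev/energy class and that the zero-order terms can be absorbed with the right sign (this is where $f(0)=0$ in the sense of \eqref{eq: condition of f at origin}, $q>p$, and $m>0$ are used — $m>0$ actually helps, giving the correct sign for absorption). A secondary delicate point is removing the $\varepsilon$ and proving the sharp limit rather than just two-sided bounds: one must control the solution near $r=0$ well enough that the perturbative ODE terms are integrable after the logarithmic change of variables, which requires the preliminary rough bound to already be of the form $u(r)\sim r^{-\gamma_1}$ up to $r^{\pm\varepsilon}$ with $\varepsilon$ small relative to $q-p$. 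The case $\mu=0$ (so $\gamma_1=0$) is the boundary case where $u$ stays bounded at the origin and the statement just says $u(0^+)=C>0$, which follows from Harnack-type estimates and the regularity theory; I would treat it either as a degenerate instance of the above or separately at the start.
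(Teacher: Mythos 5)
Your plan has the right ingredients but contains two genuine gaps, and the endgame is closer to the paper's argument than you may realize.

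First, the early claim that ``since $u\in W^{1,p}(\R^N)$ one has $u(r)\to 0$ or $u(r)\to\ell$ as $r\to 0$'' and that one should ``rule out unboundedness'' is wrong when $\mu>0$: the assertion of the theorem is precisely that $u(x)\sim C|x|^{-\gamma_1}$ with $\gamma_1>0$, so $u$ generically blows up at the origin. The paper's actual entry point is not boundedness but the Lions radial estimate $u(r)r^{(N-p)/p}=o(1)$ as $r\to0$ (Corollary II.1 of \cite{Lions1982}), which yields the crucial control $\bigl|f(u(r))r^{p}/u^{p-1}(r)\bigr|=o(1)$ and hence makes the $m$- and $f$-terms genuinely lower order compared with the Hardy term. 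Without some a priori estimate of this type, one cannot even place the competitor functions $|x|^{-\gamma_1\pm\varepsilon}$ as sub/super-solutions, because $f(u)/u^{p-1}$ is a priori only $O(|u|^{p^*-p})$, which could be large.

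Second, the comparison step on the punctured ball is not justified: the operator is singular at $x=0$, and your competitors $|x|^{-\gamma_1\mp\varepsilon}$ blow up there, so one must say what boundary ordering at the singular point makes the comparison principle close. The paper avoids this entirely for radial solutions by passing directly from the Lions bound to a Riccati-type variable $w(r)=-r^{p-1}|u'|^{p-2}u'/u^{p-1}$ and solving a scalar first-order ODE. That said, your Emden--Fowler change of variables $v(t)=u(e^{-t})e^{-\gamma_1 t}$ is essentially the same device in different notation, since $v'(t)/v(t)=w(r)^{1/(p-1)}-\gamma_1$ with $r=e^{-t}$: proving $v'/v\to 0$ and $v\to C$ is exactly the paper's assertion $w\to\gamma_1^{p-1}$ together with the $o(r^\delta)$ refinement. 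So the convergence endgame is not a genuinely different route, and the comparison detour is unnecessary. If you start from the Lions estimate instead of attempting a comparison-principle argument on a punctured ball, your scheme essentially collapses into the paper's proof.
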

We remark that Theorem \ref{thm: Asymptotic behavior of GS} is also
true for all $m\in\R$.

In Theorem \ref{thm: Asymptotic behavior of GS} and in the rest of
the note, the exponent $\gamma_{1}$ and the exponent $\ga_{2}$ that
will be needed later are defined as follows. Let $\Ga_{\mu}:[0,\wq)\to\R$
be defined by
\begin{eqnarray}
\Ga_{\mu}(\ga)\equiv\gamma^{p-1}[(p-1)\gamma-(N-p)]+\mu, &  & \ga\in[0,\infty).\label{eq: definition of Gamma-mu}
\end{eqnarray}
Consider the equation
\begin{eqnarray}
\Ga_{\mu}(\ga)=0, &  & \ga\in[0,\infty).\label{eq: gamma 1 and gamma 2}
\end{eqnarray}
Due to our assumptions on $N,p$ and $\mu$, that is, $1<p<N$, $0\le\mu<\bar{\mu}=\left((N-p)/p\right)^{p},$
equation (\ref{eq: gamma 1 and gamma 2}) admits two and only two
nonnegative solutions, which are denoted by $\ga_{1}$ and $\ga_{2}$,
 satisfying
\[
0\le\ga_{1}<\frac{N-p}{p}<\ga_{2}\le\frac{N-p}{p-1}.
\]
Note that in the case when $\mu=0$, we have $\gamma_{1}=0$ and $\ga_{2}=(N-p)/(p-1)$,
and that in the case  when $p=2$, we have ${\gamma_{1}}=\sqrt{\overline{\mu}}-\sqrt{\overline{\mu}-\mu}$
and ${\gamma_{2}}=\sqrt{\overline{\mu}}+\sqrt{\overline{\mu}-\mu}$.

As to the asymptotic behavior of positive radial weak solutions of
equation (\ref{eq: Object}) at the infinity, we follow the argument
of Li and Zhao \cite{LZ2005} and obtain the following result.
\begin{thm}
\label{thm: Asym. Beha. of GS at infinity} Assume that $m>0$, $0\le\mu<\bar{\mu}=\left((N-p)/p\right)^{p}$
and that $f\in C(\R)$ satisfies (\ref{eq: condition of f at origin})
and (\ref{eq: condition of f at infiinity}). Let $u\in W^{1,p}(\R^{N})$
be a positive radial weak solution of equation (\ref{eq: Object}).
Then
\[
\lim_{|x|\to\wq}u(x)|x|^{\frac{N-1}{p(p-1)}}e^{\left(\frac{m}{p-1}\right)^{\frac{1}{p}}|x|}=C
\]
for a constant $0<C<\infty$.
\end{thm}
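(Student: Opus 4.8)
The strategy is to reduce the statement to an ODE analysis in the radial variable and then, in the spirit of Li--Zhao \cite{LZ2005}, to run a trapping argument for the logarithmic derivative of $u$; the Hardy term $\mu|x|^{-p}|u|^{p-2}u$ and the nonlinearity $f$ will both be shown to be negligible as $|x|\to\infty$. Concretely, writing $v(r):=u(x)$ with $r=|x|$, the $C^{1,\alpha}$-regularity theory for the $p$-Laplacian on $\{|x|\ge 1\}$ (where the coefficient $\mu|x|^{-p}$ is bounded) together with radial symmetry shows that $v\in C^{1}([1,\infty))$, that $v\in C^{2}$ wherever $v'\ne 0$, and that the radial form of equation \eqref{eq: Object} reads
\[
\bigl(r^{N-1}|v'|^{p-2}v'\bigr)'=r^{N-1}v^{p-1}b(r),\qquad b(r):=m-\frac{\mu}{r^{p}}-\frac{f(v(r))}{v(r)^{p-1}},
\]
classically for $r>1$. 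Since $u\in W^{1,p}(\R^{N})$ is radial, the radial Sobolev lemma gives $v(r)\to 0$ as $r\to\infty$, and then \eqref{eq: condition of f at origin} (which yields $|f(t)|\le C|t|^{q-1}$ for small $|t|$, with $q>p$) forces $b(r)\to m$. Fix $R_{0}$ with $b\ge m/2$ on $[R_{0},\infty)$. After enlarging $R_{0}$ one also has $v'<0$ on $[R_{0},\infty)$: the function $G:=r^{N-1}|v'|^{p-2}v'$ satisfies $G'=r^{N-1}v^{p-1}b>0$, hence $G$ is strictly increasing and has a limit in $(-\infty,+\infty]$; a positive limit would make $v'>0$ eventually, forcing $v$ to increase and contradicting $v>0$, $v\to 0$; so $G<0$ and $v'<0$.

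Now set $t(r):=-v'(r)/v(r)>0$ for $r\ge R_{0}$. Substituting $v'=-vt$ into the second-order form of the equation, a direct computation gives the Riccati-type ODE
\[
t'=t^{2}-\frac{N-1}{(p-1)r}\,t-\frac{b(r)}{(p-1)\,t^{p-2}}.
\]
Elementary estimates confine $t$: if $t$ is large the $t^{2}$ term dominates, so $t'\ge\tfrac12 t^{2}$ and $t$ blows up in finite $r$, which is impossible; if $t$ is small, then $t'\le-\tfrac{m}{4(p-1)}t^{2-p}$, whence $\tfrac{d}{dr}(t^{p-1})\le-\tfrac{m}{4}$, which would force $t^{p-1}$ to become negative. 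Therefore $0<c_{1}\le t(r)\le c_{2}<\infty$ for $r$ large. The ``frozen'' nonlinearity $g(t):=t^{2}-\frac{m}{(p-1)t^{p-2}}$ has a unique zero at $c:=\bigl(m/(p-1)\bigr)^{1/p}$, with $g<0$ on $(0,c)$, $g>0$ on $(c,\infty)$ and $g'(c)=pc>0$, and $t'-g(t)\to 0$ uniformly on $[c_{1},c_{2}]$ as $r\to\infty$. A trapping argument then yields $t(r)\to c$: for $r$ large, if $t\ge c+\varepsilon$ then $t'\ge\tfrac12 g(c+\varepsilon)>0$, so $t$ keeps increasing, contradicting $t\le c_{2}$; and if $t\le c-\varepsilon$ then $t'$ is negative and bounded away from $0$, so $t$ keeps decreasing, contradicting $t\ge c_{1}$.

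Finally, write $t=c+s$ with $s(r)\to 0$ and linearize: the ODE becomes
\[
s'-pc\,s=-\frac{(N-1)c}{(p-1)r}+\rho(r),
\]
where $\rho$ gathers the $O(s^{2})$, $O(s/r)$ and $O\bigl(m-b(r)\bigr)$ contributions. Since $\mu r^{-p}=O(r^{-p})$ with $p>1$, and since $t\to c$ already forces $v(r)=e^{-cr(1+o(1))}$ so that $|f(v)|\,v^{1-p}\le Cv^{q-p}$ decays exponentially (here $q>p$ is used), the remainder $\rho$ is integrable near infinity. Because $pc>0$ and $s\to 0$, the only admissible solution is $s(r)=-e^{pcr}\int_{r}^{\infty}e^{-pc\tau}\bigl(-\tfrac{(N-1)c}{(p-1)\tau}+\rho(\tau)\bigr)\,d\tau$, and a Laplace-type estimate of this integral gives $s(r)=\frac{N-1}{p(p-1)r}+\sigma(r)$ with $\sigma\in L^{1}$ near infinity. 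Integrating $-\log v(r)=\int^{r}t$ then produces $-\log v(r)=cr+\frac{N-1}{p(p-1)}\log r+(\text{a term with a finite limit})$, that is $\lim_{r\to\infty}v(r)\,r^{\frac{N-1}{p(p-1)}}e^{cr}=C$ for some $C\in(0,\infty)$. As $c=\bigl(m/(p-1)\bigr)^{1/p}$, this is exactly the claim.

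The main difficulty is concentrated in the last two steps. The equilibrium $c$ is a source for the limiting autonomous equation $t'=g(t)$, so the convergence $t\to c$ cannot be deduced from any stability statement; it must be forced by the a priori confinement of $t$ to a compact subset of $(0,\infty)$, and extracting that confinement from the Riccati equation — in particular the lower bound, which governs the exponential decay rate of $u$ — is the delicate part. In the refinement step one must then check carefully that every error term produced by the linearization is integrable at infinity; this is precisely the point where the hypotheses $p>1$ (controlling $\mu|x|^{-p}$) and $q>p$ (controlling $f$) enter.
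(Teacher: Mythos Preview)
Your overall strategy coincides with the paper's: pass to a Riccati-type equation for the logarithmic derivative, prove convergence to the equilibrium $c=(m/(p-1))^{1/p}$, linearize around $c$ to extract the $1/r$ correction, and integrate. The choice of variable $t=-v'/v$ rather than the paper's $\phi=t^{p-1}$, and the monotonicity-based trapping for $t\to c$ rather than the paper's local-extrema argument, are harmless variations; your confinement argument for $t$ is correct.

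There is, however, a genuine gap in the refinement step. You assert that the remainder $\rho$ is integrable near infinity, but $\rho$ contains terms of order $s^{2}$ and $s/r$, and at this stage you only know $s\to 0$; that alone does not give $s^{2}\in L^{1}$ (take $s=1/\log r$). Your justification treats only the $O(m-b)$ contribution. The paper closes this gap with an intermediate bootstrap: writing $\phi_{1}=\phi-\phi_{\infty}$, it first multiplies the linearized equation by $\phi_{1}$ and integrates over $(r,\infty)$ to obtain the energy bound $\phi_{1}^{2}(r)+\tfrac{\alpha_{0}}{2}\int_{r}^{\infty}\phi_{1}^{2}\le C/r$, hence $\phi_{1}=O(r^{-1/2})$; feeding this into the integrating-factor representation upgrades to $\phi_{1}=O(1/r)$, after which the quadratic remainder is $O(1/r^{2})$, genuinely integrable, and the precise coefficient can be read off. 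Your integral formula for $s$ could be used for an analogous self-improving estimate (bound $M(r)=\sup_{\tau\ge r}|s(\tau)|$ by $C/r+C\,M(r)^{2}$ and absorb the quadratic term since $M(r)\to 0$), but some such step is required and is missing from the proposal as written.
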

In fact, we obtain more precise estimates, see Theorem \ref{thm: precise estimate of GS at infinity}
in Section 2.

\subsection{Asymptotic behaviors of general weak solutions }

Now we consider the asymptotic behaviors of general weak solutions
to equation (\ref{eq: Object}) (not necessarily positive or radially
symmetric). Not much is known in this respect.

For radially symmetric weak solution $u\in W^{1,2}(\R^{N})$ of equation
(\ref{eq: Object}) when $p=2$, it follows from standard argument
of ordinary differential equations (see e.g. \cite{Lions1983,Strauss1977})
that $u$ decays to zero exponentially at infinity (see e.g. \cite[Section 4.2]{Lions1983}
for $p=2$ and $\mu=0$). That is, there exist constants $\de,C>0$
such that
\begin{eqnarray*}
|u(x)|\le Ce^{-\de|x|}, &  & \text{for }|x|\text{ large enough}.
\end{eqnarray*}
In general, for $0\le\mu<\bar{\mu}$ and $1<p<N$, one can follow
the argument of Li \cite{LiGB2983} to prove that the weak solutions
$u\in W^{1,p}(\R^{N})$ of equation (\ref{eq: Object}) satisfy (\ref{eq:ground state condition}).

In the following, we give a complete description on the asymptotic
behaviors of general weak solutions to equation (\ref{eq: Object})
both at the origin and at the infinity. We have the following result
on the asymptotic behavior of general weak solutions at the origin.
\begin{thm}
\label{thm: results for weak solutions at origin} Assume that $m>0$,
$0\le\mu<\bar{\mu}=\left((N-p)/p\right)^{p}$ and that $f\in C(\R)$
satisfies (\ref{eq: condition of f at origin}) and (\ref{eq: condition of f at infiinity}).
Let $u\in W^{1,p}(\mathbb{R}^{N})$ be a weak solution to equation
(\ref{eq: Object}). Then there exists a positive constant $c_{1}$
depending on $N,p,\mu,m,q,A$ and the solution $u$ such that
\begin{eqnarray}
|u(x)|\le c_{1}|x|^{-\gamma_{1}} &  & \text{for }|x|<r_{1},\label{eq: growth at origin of weak solutions}
\end{eqnarray}
where $r_{1}$, $0<r_{1}<1$, is a constant depending on $N,p,\mu,m,q,A$
and the solution $u$. If, in addition, both $u$ and $f(u)$ are
nonnegative in $B_{\rho}(0)$ with $\rho>0$, then there exists a
positive constant $c_{2}$ depending on $N,p,\mu,m,q$ and $A$ such
that
\begin{eqnarray}
u(x)\ge c_{2}\left(\inf_{B_{r_{2}}(0)}u\right)|x|^{-\gamma_{1}} &  & \text{for }|x|<r_{2},\label{eq: inverse estimate at the origin}
\end{eqnarray}
where $r_{2}$, $0<r_{2}<\rho$, is a constant depending on $N,p,\mu,m,q$
and $A$.
\end{thm}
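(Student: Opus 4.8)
The plan is to prove the upper bound (\ref{eq: growth at origin of weak solutions}) and the lower bound (\ref{eq: inverse estimate at the origin}) separately, treating both as purely local statements near the origin, and to build everything on comparison with explicit radial barriers. The basic observation is that $|x|^{-\gamma}$ satisfies $-\Delta_{p}w-\mu|x|^{-p}|w|^{p-2}w=-\Gamma_{\mu}(\gamma)|x|^{-(p-1)\gamma-p}$ in $\R^{N}\setminus\{0\}$; since $\Gamma_{\mu}$ is strictly decreasing on $[0,(N-p)/p]$ with $\Gamma_{\mu}(\gamma_{1})=0$, the power $|x|^{-\gamma_{1}}$ is a positive solution of the principal homogeneous operator, and these powers belong to $W^{1,p}(B_{r})$ exactly because $\gamma_{1}<(N-p)/p$; note also the elementary identity $\gamma_{1}^{p}<\mu$, again equivalent to $\gamma_{1}<(N-p)/p$. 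The first step is a preliminary bound. Away from $0$ the Hardy coefficient is bounded and smooth, so by the standard local boundedness and Hölder theory for quasilinear equations together with (\ref{eq: condition of f at origin})--(\ref{eq: condition of f at infiinity}), $u$ is locally bounded and continuous on $\R^{N}\setminus\{0\}$. Near $0$ the Hardy term is critical, but because $\mu<\bar{\mu}$ a localized Brezis--Kato/Moser iteration closes: the singular term produced on testing is reabsorbed, via Hardy's inequality (\ref{eq: Hardy inequality}), into the Dirichlet term with a fixed amount of room, while the $m$-term and the nonlinearity $f(u)\lesssim|u|^{q-1}+|u|^{p^{*}-1}$ are subordinate (here one also uses smallness of $\|u\|_{L^{p^{*}}}$ on small balls for the critical term). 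This gives $u\in L^{s}_{\mathrm{loc}}$ for every $s<N/\gamma_{1}$, and then a local maximum estimate on the dyadic annuli $\{r/2<|x|<2r\}$, rescaled to unit size, converts this into $|u(x)|\le C_{\varepsilon}|x|^{-\gamma_{1}-\varepsilon}$ near $0$ for every $\varepsilon>0$.

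For the upper bound I would then remove the loss $\varepsilon$. If $u$ is bounded near $0$ (in particular whenever $\mu=0$, so $\gamma_{1}=0$) then (\ref{eq: growth at origin of weak solutions}) is immediate; so assume $u$ is unbounded near $0$, whence near the origin $f(u)$ is controlled by $A|u|^{p^{*}-1}$ via (\ref{eq: condition of f at infiinity}). The function $|u|$ is then a weak subsolution near $0$ of $-\Delta_{p}v-\mu|x|^{-p}|v|^{p-2}v=G(x)$ with $G\lesssim|u|^{p^{*}-1}+|u|^{p-1}+1\lesssim|x|^{-(\gamma_{1}+\varepsilon)(p^{*}-1)}$ by the preliminary bound. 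I would construct a radial supersolution of the same equation of the form $\bar w=a|x|^{-\gamma_{1}}-b|x|^{-(\gamma_{1}-\varepsilon')}$ with $a,b>0$ and $0<\varepsilon'<\gamma_{1}$ small: the leading power contributes nothing to the principal operator since $\Gamma_{\mu}(\gamma_{1})=0$, and the correction produces a term of order $|x|^{-(p-1)\gamma_{1}-p+\varepsilon'}$ with positive coefficient (this is where $\gamma_{1}^{p}<\mu$ enters), which dominates $G$ in a sufficiently small punctured ball because $(\gamma_{1}+\varepsilon)(p^{*}-1)<(p-1)\gamma_{1}+p$ for $\varepsilon,\varepsilon'$ small (once more equivalent to $\gamma_{1}<(N-p)/p$). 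Choosing $a$ large enough that $\bar w\ge|u|$ on $\partial B_{r_{1}}$ and applying the comparison principle for the Hardy $p$-Laplacian — valid because $\mu<\bar{\mu}$, and not obstructed by the origin since $|u|,\bar w\in W^{1,p}(B_{r_{1}})$ and $\{0\}$ has zero $p$-capacity — yields $|u|\le\bar w\le a|x|^{-\gamma_{1}}$ on $B_{r_{1}}$, which is (\ref{eq: growth at origin of weak solutions}) with $c_{1}=a$, $r_{1}$ depending on $N,p,\mu,m,q,A$ and on $u$.

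For the lower bound, assume in addition $u\ge0$ and $f(u)\ge0$ on $B_{\rho}(0)$; then $u$ is a weak supersolution of the $(p-1)$-homogeneous operator $Mv:=-\Delta_{p}v-\mu|x|^{-p}v^{p-1}+mv^{p-1}=0$ on $B_{\rho}$. Put $\sigma:=\inf_{B_{r_{2}}(0)}u$; if $\sigma=0$ there is nothing to prove, so suppose $\sigma>0$, i.e. $u\ge\sigma$ on $\overline{B_{r_{2}}}$. Mirroring the previous step, I would build an explicit radial subsolution of $Mv=0$ of the form $\underline w=|x|^{-\gamma_{1}}+|x|^{-(\gamma_{1}-\kappa)}$ with $\kappa>0$ small (so $\underline w\sim|x|^{-\gamma_{1}}$ near $0$): the $|x|^{-\gamma_{1}}$ part is again annihilated by the principal operator, the correction produces a term of order $|x|^{-(p-1)\gamma_{1}-p+\kappa}$ with \emph{negative} coefficient (again using $\gamma_{1}^{p}<\mu$) which dominates the benign term $m\underline w^{p-1}$, so $M\underline w\le0$ in a small punctured ball — this fixes $r_{2}$, depending only on $N,p,\mu,m$. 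By $(p-1)$-homogeneity of $M$ and $\underline w\in W^{1,p}(B_{r_{2}})$, the function $c\sigma\,\underline w$ is again a subsolution for every $c>0$; choosing $c=c_{2}$ small enough that $c_{2}\sigma\,\underline w\le\sigma$ on $\partial B_{r_{2}}$ and invoking the comparison principle again yields $u\ge c_{2}\sigma\,\underline w\ge c_{2}\,(\inf_{B_{r_{2}}(0)}u)\,|x|^{-\gamma_{1}}$ on $B_{r_{2}}$, which is (\ref{eq: inverse estimate at the origin}); here $c_{2}$ and $r_{2}$ depend only on $N,p,\mu,m,q,A$, since the construction of $\underline w$ does.

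The hard part will be Step 1, and inside it the fact that the integrability iteration closes at the critical Hardy exponent: everything downstream — in particular the very possibility of using powers of $|x|$ as barriers and the validity of the comparison principle — rests on the strict inequality $\mu<\bar{\mu}$ and on $\gamma_{1}<(N-p)/p$. A secondary, purely technical, difficulty is that for $p\ne2$ the operator $\Delta_{p}$ is nonlinear, so in the barrier constructions a sum of two powers cannot be differentiated termwise; one must expand and track the surviving next-order term, whose coefficient is precisely what the identity $\gamma_{1}^{p}<\mu$ controls, and likewise verify the comparison principle for the Hardy $p$-Laplacian in the range $\mu<\bar{\mu}$ rather than only for $p=2$.
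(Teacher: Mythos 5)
Your proposal is correct in outline but takes a genuinely different route from the paper's proof. The paper relies on two black-box results from \cite{CXY}: Proposition~\ref{prop: poor estimate} (a Brezis--Kato type estimate giving only $|u|\lesssim|x|^{-\frac{N-p}{p}+\tau_{0}}$ for some small $\tau_{0}>0$) and Theorem~\ref{thm: Xiang Theorem 1.3}, which boosts a subsolution of $-\De_{p}w-\mu|x|^{-p}|w|^{p-2}w=g|w|^{p-2}w$ with $|g|\le C_{0}|x|^{-\al}$, $\al<p$, to the sharp bound $u\le CM|x|^{-\ga_{1}}$. You instead push the iteration all the way: your Step~1 claims $u\in L^{s}_{\mathrm{loc}}$ for \emph{every} $s<N/\gamma_{1}$, and this is sharp (the critical test exponent $\beta_{+}$ in the Moser/Brezis--Kato scheme solves $\bar{\mu}(p\beta-p+1)=\mu\beta^{p}$, giving $\beta_{+}=\frac{(N-p)/p}{\gamma_{1}}$ and hence $p^{*}\beta_{+}=N/\gamma_{1}$), plus a dyadic rescaling gives $|u|\lesssim|x|^{-\gamma_{1}-\varepsilon}$. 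With that in hand your one-step barrier $a|x|^{-\gamma_{1}}-b|x|^{-(\gamma_{1}-\epsilon')}$ finishes, because $(\gamma_{1}+\epsilon)(p^{*}-1)<(p-1)\gamma_{1}+p$ is exactly $\gamma_{1}<(N-p)/p$; in the paper's route, by contrast, the preliminary exponent $(N-p)/p-\tau_{0}$ satisfies $((N-p)/p-\tau_{0})(p^{*}-1)>(p-1)\gamma_{1}+p$, so a single such barrier would \emph{not} close and CXY's Theorem~\ref{thm: Xiang Theorem 1.3} necessarily involves more work (an iterated improvement). Your lower-bound argument is essentially the paper's word for word: the barrier $|x|^{-\ga_{1}}(1+|x|^{\kappa})$ is the paper's $w_{0}=|x|^{-\gamma_{1}}(1+\delta|x|^{\epsilon})$ with $\delta$ normalized, and the comparison is Theorem~\ref{thm: comparison principle of Xiang}. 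A few caveats: the phrase ``reabsorbed\,\dots\,with a fixed amount of room'' is misleading, since the absorption constant $(\beta p-p+1)/\beta^{p}-\mu/\bar{\mu}$ degrades to zero as $\beta\uparrow\beta_{+}$; working with $|u|$ as a subsolution invokes a Kato-type inequality for $\De_{p}$ which is delicate when $1<p<2$, and it is cleaner to argue as the paper does, applying the comparison to $u$ and to $-u$; and the ``identity'' $\gamma_{1}^{p}<\mu$ fails (with equality) when $\mu=0$, though that case is trivial. Finally, you would still need to supply the comparison principle for the Hardy $p$-Laplacian (the paper's Theorem~\ref{thm: comparison principle of Xiang}), which you flag but do not prove; in the paper this too is cited from \cite{CXY}.
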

We also remark that Theorem \ref{thm: results for weak solutions at origin}
is true for all $m\in\R$.

In the above Theorem \ref{thm: results for weak solutions at origin},
the constants $c_{1}$ and $r_{1}$ depend on the solution $u$. Precisely,
they depend on $\|u\|_{p^{*},B_{1}(0)}$, the $L^{p^{*}}$-norm of
$u$ in the unit ball $B_{1}(0)$. They also depend on the modulus
of continuity of the function  $h(\rho)=\|u\|_{p^{*},B_{\rho}(0)}^{p^{*}-p}$
at $\rho=0$ as follows. We can  choose a constant $\epsilon_{0}>0$
depending on $N,p,\mu,m,q$ and $A$. Since $h(\rho)\to0$ as $\rho\to0$,
 there exists $\rho_{0}>0$ such that
\[
\|u\|_{p^{*},B_{\rho_{0}}(0)}^{p^{*}-p}<\epsilon_{0}.
\]
The constants $c_{1},r_{1}$ in Theorem \ref{thm: results for weak solutions at origin}
depend also on $\rho_{0}$.

The estimate (\ref{eq: growth at origin of weak solutions}) in Theorem
\ref{thm: results for weak solutions at origin} follows from the
following result proved in \cite{CXY} (see \cite[Theorem 1.3]{CXY}).
\begin{thm}
\label{thm: Xiang Theorem 1.3} Let $\Om\subset\R^{N}$ be a bounded
domain with $0\in\Om$ and let $g\in L^{\frac{N}{p}}(\Om)$ satisfy
\begin{eqnarray}
g(x)\le C_{0}|x|^{-\al} &  & \text{in }\Om,\label{eq: condition of Xiang 1}
\end{eqnarray}
where $C_{0}>0$ and $\al<p$. If $u\in W^{1,p}(\Om)$ is a weak subsolution
to equation
\begin{eqnarray}
-\Delta_{p}w-\frac{\mu}{|x|^{p}}|w|^{p-2}w=g|w|^{p-2}w &  & \text{in }\Om,\label{eq: Xiang 1}
\end{eqnarray}
then there exists a constant $C>0$ depending on $N,p,\mu,C_{0}$
and $\al$ such that
\begin{eqnarray*}
u(x)\le CM|x|^{-\ga_{1}} &  & \text{for }x\in B_{r_{0}}(0),
\end{eqnarray*}
where $M=\sup_{\pa B_{r_{0}}(0)}u^{+}$ and $r_{0}$, $0<r_{0}<1$,
is a constant depending on $N,p,\mu,C_{0}$ and $\al$. Here $u^{+}=\max(u,0)$.
\end{thm}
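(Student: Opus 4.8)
The strategy is a comparison argument on a small ball $B_{r_0}(0)$, built around the fact that $\ga_1$ is, by definition, the smaller root of $\Ga_\mu$. A direct computation gives, for radial powers,
\[
-\Delta_p\big(|x|^{-\ga}\big)-\frac{\mu}{|x|^p}\,|x|^{-\ga(p-1)}=-\Ga_\mu(\ga)\,|x|^{-\ga(p-1)-p},
\]
so $|x|^{-\ga_1}$ solves the homogeneous equation $-\Delta_p w-\mu|x|^{-p}|w|^{p-2}w=0$ in $\R^N\setminus\{0\}$, and $|x|^{-\ga}$ is a supersolution of it precisely when $\Ga_\mu(\ga)\le0$, i.e. for $\ga\in[\ga_1,\ga_2]$; moreover, since $\ga_1<(N-p)/p$, one has $|x|^{-\ga_1}\in W^{1,p}(B_{r_0}(0))$ and the identity above holds distributionally on all of $B_{r_0}(0)$, with no mass concentrated at the origin. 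First I would reduce to $u\ge0$: the positive part $u^+$ is again a weak subsolution of (\ref{eq: Xiang 1}) (test the inequality for $u$ against $\psi\,T_\ep(u^+)$ with $T_\ep(s)=\min(s/\ep,1)$ and $\psi\in C_0^\infty$ nonnegative, discard the nonnegative gradient term that appears, and let $\ep\to0$; the sign of $g$ is irrelevant, since everything is then evaluated on $\{u>0\}$). Since $g\in L^{N/p}(\Om)$ has arbitrarily small $L^{N/p}$-norm on small balls while the Hardy weight is bounded away from the origin, standard De Giorgi--Moser iteration produces a locally bounded, indeed continuous, representative of $u$ on $\Om\setminus\{0\}$, so $M=\sup_{\partial B_{r_0}(0)}u^+<\wq$.

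Next I would build the barrier. For $\ga\in(\ga_1,(N-p)/p)$ the function $W=CM|x|^{-\ga}\in W^{1,p}(B_{r_0}(0))$ satisfies $-\Delta_pW-\mu|x|^{-p}W^{p-1}=(CM)^{p-1}\big(-\Ga_\mu(\ga)\big)|x|^{-\ga(p-1)-p}>0$, whereas $gW^{p-1}\le C_0|x|^{-\al}W^{p-1}=(CM)^{p-1}C_0|x|^{p-\al}\,|x|^{-\ga(p-1)-p}$; since $\al<p$, choosing $r_0$ so small that $C_0r_0^{p-\al}\le-\Ga_\mu(\ga)$ makes $W$ a genuine supersolution of (\ref{eq: Xiang 1}) on $B_{r_0}(0)$, while $W\ge u$ on $\partial B_{r_0}(0)$ as soon as $C\ge1$ (using $r_0<1$ and $\ga>\ga_1\ge0$). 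This already yields $u\le CM|x|^{-\ga}$ for every $\ga>\ga_1$. To reach the endpoint exponent $\ga_1$ I would refine the barrier to the two-term function $W=CM\big(|x|^{-\ga_1}+\la|x|^{-\ga_1+p-\al}\big)$: the leading term contributes nothing to $-\Delta_pW-\mu|x|^{-p}W^{p-1}$, and a radial computation shows the correction contributes a term of order exactly $|x|^{-\al-\ga_1(p-1)}$ — the same order as $gW^{p-1}$ — with a coefficient that can be made to dominate $C_0$ by an appropriate choice of the sign and size of $\la$ and of $r_0$ (the exceptional value $\ga_1=p-\al$ requires a logarithmic correction instead, and $\ga_1=0$, i.e. $\mu=0$, is just local boundedness). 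Since this $W$ is comparable to $|x|^{-\ga_1}$ on $B_{r_0}(0)$, the stated bound follows once the comparison holds. Alternatively one closes the gap by bootstrapping: from $u\lesssim|x|^{-\ga}$ one gets $gu^{p-1}\lesssim|x|^{-\al-\ga(p-1)}$, and comparison with $D|x|^{-\de}$, $\de=\ga-(p-\al)/(p-1)$, lowers the exponent by the fixed step $(p-\al)/(p-1)$, iterated until one is within that step of $\ga_1$.

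The comparison principle itself is the main obstacle, because the operator $w\mapsto-\Delta_pw-\mu|x|^{-p}|w|^{p-2}w-g|w|^{p-2}w$ is not monotone. Testing the sub- and supersolution inequalities against $\varphi=(u-W)^+\in W^{1,p}_0(B_{r_0}(0))$ produces the good monotonicity term for $-\Delta_p$, but also $\int_{\{u>W\}}\mu|x|^{-p}\big(u^{p-1}-W^{p-1}\big)(u-W)$ and the corresponding $g$-term, which have the \emph{wrong} sign for the argument. These must be controlled — via the Hardy inequality (\ref{eq: Hardy inequality}) together with $\mu<\bar\mu$ and $\al<p$ after shrinking $r_0$, or, most cleanly, via Picone's inequality for the $p$-Laplacian, which is tailored to Hardy-type operators and lets those terms cancel in the energy comparison (the subcritical $g$-term, of order $\al<p$, being handled as a small perturbation). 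One must also verify that $\varphi$ is an admissible test function and that all singular integrals converge; here $\ga_1<(N-p)/p$ enters once more, since it is exactly what makes $|x|^{-p-\ga_1(p-1)}\varphi\in L^1(B_{r_0}(0))$ for every $\varphi\in W^{1,p}_0(B_{r_0}(0))$, via H\"older and Hardy. A structurally parallel, and arguably cleaner, alternative for the upper bound is the substitution $v=|x|^{\ga_1}u$: since $\ga_1p<N$ the weight $|x|^{-\ga_1p}$ lies in the Muckenhoupt class $A_p$, the zeroth-order term of the transformed problem is subcritical in the associated weighted Sobolev embedding because $\al<p$, and weighted De Giorgi--Moser iteration then yields $v\in L^\infty_{\rm loc}$ near the origin, which is precisely $u\le C|x|^{-\ga_1}$.
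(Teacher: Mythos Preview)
The paper does not prove this theorem; it is quoted verbatim from \cite[Theorem~1.3]{CXY} and used as a black box in the proof of Theorem~\ref{thm: results for weak solutions at origin}. So there is no ``paper's own proof'' to compare against here.

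That said, your strategy is the natural one and lines up well with the tools the paper \emph{does} display for the companion lower bound. The two-term barrier $|x|^{-\gamma_1}(1\pm\delta|x|^{\epsilon})$ is exactly what appears in the proof of (\ref{eq: inverse estimate at the origin}) (see (\ref{eq: equation of sub 2})--(\ref{eq: tilde of h -2})), and the comparison principle you want is precisely Theorem~\ref{thm: comparison principle of Xiang}, also imported from \cite{CXY}. Your diagnosis of the main obstacle --- the non-monotone zeroth-order term --- and of Picone's inequality as the cure is correct; the hypothesis $\inf_{\Omega}u>0$ on the supersolution in Theorem~\ref{thm: comparison principle of Xiang} is exactly what makes the Picone argument go through, and your barrier $W$ is strictly positive. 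The weighted Moser alternative via $v=|x|^{\gamma_1}u$ is also a standard and viable route.

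One point to tighten: for $p\ne2$ the $p$-Laplacian does not act linearly on sums, so your assertion that the correction term in $W=CM\big(|x|^{-\gamma_1}+\lambda|x|^{-\gamma_1+p-\alpha}\big)$ ``contributes a term of order exactly $|x|^{-\alpha-\gamma_1(p-1)}$'' needs the explicit computation. Writing $W=CM|x|^{-\gamma_1}(1+\lambda|x|^{\epsilon})$ and expanding as in (\ref{eq: equation of sub 2})--(\ref{eq: property of h}) shows that the right-hand side is $\tilde h(x)W^{p-1}$ with $\tilde h(x)\sim -h'(0)\lambda|x|^{\epsilon-p}$ near the origin and $h'(0)=(p-1)\gamma_1^{p-2}(N-p-p\gamma_1+\epsilon)\epsilon>0$; choosing $\lambda<0$ and $\epsilon=p-\alpha$ (or any $0<\epsilon\le p-\alpha$) then makes $\tilde h\ge C_0|x|^{-\alpha}\ge g$ on a small ball, which is what you need. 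This also makes clear why the case $\gamma_1=0$ (i.e.\ $\mu=0$) degenerates: $h'(0)$ vanishes and one must go to the pure Moser argument, as you note.
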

Let $u\in W^{1,p}(\R^{N})$ be a weak solution to equation (\ref{eq: Object}).
To apply Theorem \ref{thm: Xiang Theorem 1.3}, we set $\Om=B_{1}(0)$
and define
\begin{equation}
g(x)=-m+\frac{f(u(x))}{|u(x)|^{p-2}u(x)}.\label{eq: given function g}
\end{equation}
Then $u$ is a weak solution to equation (\ref{eq: Xiang 1}) with
function $g$ defined by (\ref{eq: given function g}). By assumptions
(\ref{eq: condition of f at origin}) and (\ref{eq: condition of f at infiinity}),
we have
\begin{equation}
|g(x)|\le C(1+|u(x)|^{p^{*}-p}),\label{eq: estimate of g 1}
\end{equation}
which implies that $g\in L^{\frac{N}{p}}(B_{1}(0))$ since $u\in L^{p^{*}}(B_{1}(0))$
by the Sobolev embedding theorem. Therefore, to apply Theorem \ref{thm: Xiang Theorem 1.3},
we only need to verify that $g$ satisfies (\ref{eq: condition of Xiang 1})
with $C_{0}>0$ and $\al<p$. This follows from an apriori estimate
for the solution $u$ given by Proposition \ref{prop: poor estimate}.
In this way we prove estimate (\ref{eq: growth at origin of weak solutions}).

To prove estimate (\ref{eq: inverse estimate at the origin}) in Theorem
\ref{thm: results for weak solutions at origin}, we apply the following
comparison principle established in \cite{CXY} (see \cite[Theorem 3.2]{CXY}).
\begin{thm}
\label{thm: comparison principle of Xiang} Let $\Om\subset\R^{N}$
be a bounded domain and $g\in L^{\frac{N}{p}}(\Om)$. Let $v\in W^{1,p}(\Om)$
be a weak subsolution to equation (\ref{eq: Xiang 1}) and $u\in W^{1,p}(\Om)$
a weak supersolution to equation
\begin{eqnarray}
-\Delta_{p}w-\frac{\mu}{|x|^{p}}|w|^{p-2}w=h|w|^{p-2}w &  & \text{in }\Om\label{eq: Xiang 2}
\end{eqnarray}
such that $\inf_{\Om}u>0$, where $h\in L^{\frac{N}{p}}(\Om)$ satisfies
$h\ge g$ in $\Om$. If $v\le u$ on $\pa\Om$, then we have
\begin{eqnarray*}
v\le u &  & \text{in }\Om.
\end{eqnarray*}

\end{thm}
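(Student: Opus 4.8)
The plan is to run a Picone‑type (hidden‑convexity) argument. Write $w:=(v-u)^{+}$ and $E:=\{v>u\}$; it suffices to prove $w\equiv0$. Put $m_{0}:=\inf_{\Om}u>0$. Since on $E$ we have $v>u\ge m_{0}>0$, both $u$ and $v$ are strictly positive there, so $W:=(v^{p}-u^{p})^{+}$ (which vanishes off $E$) and the functions $\phi:=W/v^{p-1}$, $\psi:=W/u^{p-1}$ are well defined. As a preliminary I would record that, by density together with $g,h\in L^{N/p}(\Om)$, the Sobolev embedding $W^{1,p}(\Om)\hookrightarrow L^{p^{*}}(\Om)$ and the Hardy inequality \eqref{eq: Hardy inequality}, all the integrals occurring below are finite and the weak subsolution/supersolution inequalities for $v$ (equation \eqref{eq: Xiang 1}, potential $g$) and for $u$ (equation \eqref{eq: Xiang 2}, potential $h$) remain valid for every nonnegative test function in $W_{0}^{1,p}(\Om)$.

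The core step is to test the $v$-inequality with $\phi$ and the $u$-inequality with $\psi$. The reason for this choice is that $v^{p-1}\phi=W=u^{p-1}\psi$ on $E$: the zeroth‑order terms then produce exactly $\int_{E}gW$, resp. $\int_{E}hW$, on the right‑hand sides, while the two Hardy contributions coincide with the single finite integral $\int_{E}\mu|x|^{-p}W$. On $E$ one computes $\nabla\phi=[1+(p-1)(u/v)^{p}]\nabla v-p(u/v)^{p-1}\nabla u$ and $\nabla\psi=p(v/u)^{p-1}\nabla v-[1+(p-1)(v/u)^{p}]\nabla u$, and Young's inequality $p\,ab\le(p-1)a^{p/(p-1)}+b^{p}$ (valid for all $1<p<\infty$) together with $\nabla v\cdot\nabla u\le|\nabla v|\,|\nabla u|$ yields the two Picone estimates
\[
\int_{E}|\nabla v|^{p-2}\nabla v\cdot\nabla\phi\ \ge\ \int_{E}\bigl(|\nabla v|^{p}-|\nabla u|^{p}\bigr)\ \ge\ \int_{E}|\nabla u|^{p-2}\nabla u\cdot\nabla\psi .
\]
Inserting these into the two weak inequalities and cancelling $\int_{E}\mu|x|^{-p}W$ gives $\int_{E}hW\le\int_{E}gW$; since $h\ge g$ and $W\ge0$ this forces $\int_{E}(h-g)W=0$ and — crucially — equality throughout, in particular equality a.e. on $E$ in the Young and Cauchy–Schwarz inequalities used above. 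Tracing the equality case back, this means $\nabla u=(u/v)\nabla v$, hence $\nabla(u/v)=0$, a.e. on $E$.

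To finish, set $\theta:=\bigl(\min(v/u,2)-1\bigr)^{+}$, which is well defined and bounded on all of $\Om$ because $u\ge m_{0}>0$. It equals $0$ on $\{v\le u\}$, equals $1$ on $\{v\ge2u\}$, and on $\{u<v<2u\}\subset E$ its gradient is $\nabla(v/u)=0$; hence $\nabla\theta=0$ a.e. in $\Om$, and (being a Lipschitz function of the $W^{1,p}$ pair $(v,u)$) one checks $\theta\in W^{1,p}(\Om)$. Since $0\le\theta\le m_{0}^{-1}w$ with $w\in W_{0}^{1,p}(\Om)$, also $\theta\in W_{0}^{1,p}(\Om)$. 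A function of $W_{0}^{1,p}(\Om)$ on a bounded domain that is constant on each connected component of $\Om$ must vanish; therefore $\theta\equiv0$, i.e. $v\le u$ a.e. in $\Om$.

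The main obstacle is the admissibility of the Picone test functions. One has $\phi=W/v^{p-1}\le p\,w\in W_{0}^{1,p}(\Om)$, so $\phi$ is fine, but $\psi=W/u^{p-1}$ may be unbounded when $v$ is; the standard remedy is to replace $v$ by $\min(v,k)$ in the definition of $W$, obtaining truncated test functions that are bounded and dominated by $C_{k}w$, and then to let $k\to\infty$ using monotone and dominated convergence (the relevant integrands being controlled uniformly in $k$). The remaining pieces — the elementary gradient computations for $\phi$ and $\psi$, the equality case of Young's inequality, and the concluding $W_{0}^{1,p}$-rigidity — are routine.
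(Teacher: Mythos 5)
The paper does not prove Theorem~\ref{thm: comparison principle of Xiang} at all: it is quoted verbatim from \cite[Theorem 3.2]{CXY}, so there is no internal proof to compare against. I therefore assess your argument on its own merits.

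Your Picone/hidden-convexity scheme is the right one and I believe it is correct. The central identity — that testing the $v$-inequality with $\phi=W/v^{p-1}$ and the $u$-inequality with $\psi=W/u^{p-1}$ produces the same zeroth-order quantity $\int_{E} gW$ (resp.\ $\int_E hW$) and the \emph{same} Hardy integral $\int_E\mu|x|^{-p}W$, which then cancels — is exactly what makes the Hardy term harmless, and your pointwise Picone estimates and the equality analysis leading to $\nabla(u/v)=0$ a.e.\ on $E=\{v>u\}$ check out. Two places deserve more than the sentence you gave them. \emph{(i) Admissibility.} You are right that $\phi\le p\,(v-u)^{+}$, and since also $u/v<1$ on $E$ the coefficients in $\nabla\phi$ are bounded by $p$, so $\phi\in W_{0}^{1,p}(\Om)$ with no truncation; only $\psi$ must be truncated. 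After replacing $v$ by $v_{k}=\min(v,k)$ the computation on $E_{k}\cap\{v\ge k\}$ produces an extra contribution that is \emph{not} simply dominated by a fixed integrable function times $\phi_k$; the reason it disappears is rather that it is supported in $\{v\ge k\}$, whose measure tends to $0$, and is bounded there by $C(|\nabla v|^{p}+|\nabla v|^{p-1}|\nabla u|)\in L^{1}(\Om)$. That absolute-continuity argument should be made explicit, because a bare appeal to monotone/dominated convergence does not by itself close the gap. \emph{(ii) The final rigidity step.} The point that $\theta=\bigl(\min(v/u,2)-1\bigr)^{+}$ lies in $W^{1,p}(\Om)$ is genuinely subtle: $v/u$ itself need not be in $W^{1,p}$, and what saves you is that $(s,t)\mapsto\bigl(\min(s/t,2)-1\bigr)^{+}$ is globally Lipschitz on $\R\times[m_{0},\infty)$ (its partials are bounded by $2/m_{0}$ where they are nonzero). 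That is the reason the cut at level $2$ is not cosmetic; it would be worth saying so. With these two points spelled out, the domination $0\le\theta\le m_{0}^{-1}(v-u)^{+}\in W_{0}^{1,p}(\Om)$ gives $\theta\in W_{0}^{1,p}(\Om)$, $\nabla\theta=0$ a.e., and Poincar\'e forces $\theta\equiv0$, i.e.\ $v\le u$ a.e.\ in $\Om$. Overall: correct strategy, correct conclusion, with two technical steps that need to be written out rather than waved at.
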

Let $u\in W^{1,p}(\R^{N})$ be a weak solution to equation (\ref{eq: Object})
such that $u$ and $f(u)$ are nonnegative in $B_{\rho}(0)$ with
$\rho>0$. Then $u$ is a nonnegative supersolution to equation
\begin{equation}
-\Delta_{p}w-\frac{\mu}{|x|^{p}}|w|^{p-2}w=-m|w|^{p-2}w\label{eq: 1.3.1}
\end{equation}
in $B_{\rho}(0)$. To prove (\ref{eq: inverse estimate at the origin}),
we construct a weak subsolution $v\in W^{1,p}(B_{r_{2}}(0))$ to equation
(\ref{eq: 1.3.1}) in $B_{r_{2}}(0)$ for some $r_{2}\le\rho$, such
that $v\le u$ on $\pa B_{r_{2}}(0)$ and $v\ge C|x|^{-\ga_{1}}$
in $B_{r_{2}}(0)$. Then estimate (\ref{eq: inverse estimate at the origin})
follows from Theorem \ref{thm: comparison principle of Xiang}.

We also have the following result on the asymptotic behavior of general
weak solutions at the infinity.
\begin{thm}
\label{thm: results for weak solutions at infinity} Assume that $m>0$,
$0\le\mu<\bar{\mu}=\left((N-p)/p\right)^{p}$ and that $f\in C(\R)$
satisfies (\ref{eq: condition of f at origin}) and (\ref{eq: condition of f at infiinity}).
Let $u\in W^{1,p}(\mathbb{R}^{N})$ be a weak solution to equation
(\ref{eq: Object}). Then there exists a positive constant $C_{1}$
depending on $N,p,\mu,m,q,A$ and the solution $u$ such that
\begin{eqnarray}
|u(x)|\le C_{1}|x|^{-\frac{N-1}{p(p-1)}}e^{-\left(\frac{m}{p-1}\right)^{\frac{1}{p}}|x|} &  & \text{for }|x|>R_{1},\label{eq: growth at infinity of weak solutions}
\end{eqnarray}
where $R_{1}$, $R_{1}>1$, is a constant depending on $N,p,\mu,m,q,A$
and the solution $u$. If, in addition, both $u$ and $f(u)$ are
nonnegative in $\R^{N}\backslash B_{\rho}(0)$ with $\rho>0$, then
there exists a positive constant $C_{2}$ depending on $N,p,\mu,m,q$
and $A$ such that
\begin{eqnarray}
u(x)\ge C_{2}\left(\inf_{\pa B_{R_{2}}(0)}u\right)|x|^{-\frac{N-1}{p(p-1)}}e^{-\left(\frac{m}{p-1}\right)^{\frac{1}{p}}|x|} &  & \text{for }|x|>R_{2},\label{eq: inverse esimate on growth at the infinity --1}
\end{eqnarray}
where $R_{2}$, $R_{2}>\rho$, is a constant depending on $N,p,\mu,m,q$
and $A$.
\end{thm}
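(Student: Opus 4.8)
The plan is to reduce matters, in a neighborhood of infinity, to a small perturbation of the ``massive'' equation $-\Delta_{p}w+m|w|^{p-2}w=0$ and then to trap $|u|$ between explicit radial barriers modeled on the profile
\[
w_{0}(x)=|x|^{-\frac{N-1}{p(p-1)}}e^{-\left(\frac{m}{p-1}\right)^{\frac{1}{p}}|x|},
\]
which is exactly the decay of radial solutions established in Theorem~\ref{thm: precise estimate of GS at infinity}. First I would invoke, as recorded in the text following Theorem~\ref{thm: results for weak solutions at origin} (via the argument of Li~\cite{LiGB2983}), that every weak solution $u\in W^{1,p}(\R^{N})$ of \eqref{eq: Object} satisfies $u(x)\to0$ as $|x|\to\wq$. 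Hence, given $\ep>0$ there is $R_{\ep}>1$ with $|u|\le\ep$ on $\{|x|\ge R_{\ep}\}$, and by \eqref{eq: condition of f at origin} together with $q>p$ one gets $|f(u(x))|\le 2A|u(x)|^{q-1}\le 2A\ep^{q-p}|u(x)|^{p-1}$ there, so the reaction term has an arbitrarily small effective coefficient, while the Hardy potential obeys $\mu|x|^{-p}\le\mu R_{\ep}^{-p}$.

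For the upper bound \eqref{eq: growth at infinity of weak solutions} I would first pass to $w:=|u|$: the Kato-type inequality $-\Delta_{p}|u|\le(\mathrm{sgn}\,u)(-\Delta_{p}u)$ (valid weakly for $u\in W^{1,p}$) combined with \eqref{eq: Object} shows that $w$ is a nonnegative weak subsolution of
\[
-\Delta_{p}w+\bigl(m-\de(R_{\ep})\bigr)|w|^{p-2}w=0\quad\text{in }\{|x|>R_{\ep}\},\quad \de(R_{\ep})=2A\ep^{q-p}+\mu R_{\ep}^{-p}.
\]
I then construct a radial supersolution of this equation of the form $\overline{w}=\Lambda\,w_{0}^{+}$, where $w_{0}^{+}$ is $w_{0}$ multiplied by a small lower-order correction (of type $1+\kappa|x|^{-\si_{0}}$, $\si_{0}=\min(p-1,1)$) tuned so that the natural $O(|x|^{-2})$ defect of $w_{0}$ and the contributions of $\de(R_{\ep})$ and $\mu|x|^{-p}$ are absorbed, and $\Lambda$ is chosen so that $\overline{w}\ge w$ on $\{|x|=R_{\ep}\}$. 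A comparison principle on the unbounded domain $\{|x|>R_{\ep}\}$ — the exterior analogue of Theorem~\ref{thm: comparison principle of Xiang}, where coercivity ($m-\de(R_{\ep})>0$) and the decay $w,\overline{w}\to0$ replace boundedness (concretely: test the subtracted inequalities with $(w-\overline{w})^{+}\in W^{1,p}$ and use the monotonicity of $z\mapsto|z|^{p-2}z$) — yields $w\le\overline{w}$ in $\{|x|>R_{\ep}\}$. This already gives exponential decay, but only with the rate $\bigl((m-\de(R_{\ep}))/(p-1)\bigr)^{1/p}$; re-inserting this crude bound into $|f(u)|\le 2A|u|^{q-p}|u|^{p-1}$ shows $f(u)$ is in fact exponentially small, so $\de$ may be replaced by $O(e^{-c|x|}+\mu|x|^{-p})$ and a second barrier argument (with the correction term matched to this smaller defect) upgrades the rate to the sharp $\bigl(m/(p-1)\bigr)^{1/p}$ and produces \eqref{eq: growth at infinity of weak solutions}.

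For the lower bound \eqref{eq: inverse esimate on growth at the infinity --1}, assume in addition $u,f(u)\ge0$ in $\R^{N}\setminus B_{\rho}(0)$. Then $-\Delta_{p}u+mu^{p-1}=\mu|x|^{-p}u^{p-1}+f(u)\ge0$, so $u$ is a nonnegative weak supersolution of $-\Delta_{p}w+m|w|^{p-2}w=0$ in $\{|x|>\rho\}$. Fixing $R_{2}>\rho$ large enough that all the asymptotics used below are valid, I set
\[
\underline{w}(x)=\Bigl(\inf_{\pa B_{R_{2}}(0)}u\Bigr)\Bigl(\tfrac{R_{2}}{|x|}\Bigr)^{\frac{N-1}{p(p-1)}}e^{-\left(\frac{m}{p-1}\right)^{\frac{1}{p}}(|x|-R_{2})}\bigl(1-\kappa|x|^{-\si_{0}}\bigr),
\]
where $\kappa,\si_{0}>0$ are chosen so that $\underline{w}$ is a weak subsolution of $-\Delta_{p}w+m|w|^{p-2}w=0$ in $\{|x|>R_{2}\}$ — here the Hardy term only helps, since it carries the favorable sign — while on $\{|x|=R_{2}\}$ one has $\underline{w}\le\inf_{\pa B_{R_{2}}(0)}u\le u$. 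The exterior comparison principle again gives $\underline{w}\le u$ in $\{|x|>R_{2}\}$, i.e.\ \eqref{eq: inverse esimate on growth at the infinity --1} with a constant $C_{2}$ of the form $R_{2}^{(N-1)/(p(p-1))}e^{(m/(p-1))^{1/p}R_{2}}$ times a harmless factor absorbing the correction, which depends only on $N,p,\mu,m,q,A$ (as does $R_{2}$).

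The crux is the construction of the barriers $w_{0}^{\pm}$ with the exact polynomial prefactor $|x|^{-(N-1)/(p(p-1))}$: a direct computation gives $-\Delta_{p}w_{0}+mw_{0}^{p-1}=O(|x|^{-2})w_{0}^{p-1}$ (the leading and first subleading terms cancel precisely because of this choice of exponent and of $\left(m/(p-1)\right)^{1/p}$), so $w_{0}$ is by itself neither a super- nor a subsolution, and the lower-order correction must be strong enough to control this defect together with the $O(|x|^{-p})$ Hardy contribution — which is the dominant error when $1<p<2$ — yet weak enough to leave the power of $|x|$ untouched; making these inequalities close uniformly for $1<p<N$, and running the bootstrap that removes the loss in the exponential rate, is where the real work lies. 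A secondary point is the precise statement and proof of the comparison principle on the exterior domain, for which the strict positivity of the effective zeroth-order coefficient and the decay of the competitors at infinity are essential.
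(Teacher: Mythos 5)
Your overall strategy is the paper's: establish a crude exponential bound first, then sharpen it to the exact rate $(m/(p-1))^{1/p}$ with polynomial prefactor $|x|^{-(N-1)/(p(p-1))}$ by comparing $u$ with explicit radial barriers of the form $w_0(x)\bigl(1\pm\kappa|x|^{-\delta}\bigr)$ on an exterior domain, and similarly for the lower bound. The Kato-inequality reduction to $|u|$ is a harmless substitute for the paper's device of comparing $u$ and $-u$ separately, and the treatment of the Hardy term (requiring the correction exponent $\delta<p-1$ so that the induced $O(|x|^{-\delta-1})$ term dominates $\mu|x|^{-p}$, which is the binding constraint for $1<p<2$) is correctly identified.

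However, the signs of your correction terms are reversed, and this is not cosmetic. Carrying out the computation $-\Delta_p v + m|v|^{p-2}v = Q(x)|v|^{p-2}v$ for the ansatz $v(x)=|x|^{-\frac{N-1}{p(p-1)}}e^{-(\frac{m}{p-1})^{1/p}|x|}\bigl(1-\gamma|x|^{-\delta}\bigr)$ (as in the paper's Lemma \ref{lem: computational result}(ii)) shows that after the constant and $O(|x|^{-1})$ terms cancel the leading term of $Q$ is
\[
Q_0\,|x|^{-\delta-1},\qquad Q_0=\Bigl(\tfrac{m}{p-1}\Bigr)^{\frac{p-1}{p}}p(p-1)\,\delta\,\gamma,
\]
so $Q>0$ near infinity precisely when the correction is \emph{subtracted} ($\gamma>0$), and $Q<0$ when it is \emph{added} ($\gamma<0$). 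Hence the upper barrier must be $w_0(1-\kappa|x|^{-\delta})$ (a supersolution of the perturbed equation with a small nonnegative right-hand side such as $2\mu|x|^{-p}|w|^{p-2}w$), while the lower barrier must be $w_0(1+\kappa|x|^{-\delta})$ (a subsolution of $-\Delta_p w+m|w|^{p-2}w=0$). Your proposal uses $1+\kappa|x|^{-\sigma_0}$ for the upper barrier and $1-\kappa|x|^{-\sigma_0}$ for the lower barrier, which are subsolutions and supersolutions respectively and therefore sit on the wrong side of the comparison principle; as written, the argument would not close. Separately, the description of your first pass is internally inconsistent: you write $\overline{w}=\Lambda w_0^{+}$ (which already decays at the sharp rate) yet claim the first pass only yields the rate $((m-\delta(R_\epsilon))/(p-1))^{1/p}$; the paper resolves this by using the plain barrier $e^{-\alpha|x|}$ with $\alpha=((m-\epsilon)/(p-1))^{1/p}$ for the crude step (Lemma \ref{lem: computational result}(i)), dominating the Hardy term by the $(N-1)\alpha^{p-1}|x|^{-1}$ defect of $e^{-\alpha|x|}$ rather than by a crude constant bound.
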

We also prove Theorem \ref{thm: results for weak solutions at infinity}
by the comparison principle. We prove (\ref{eq: inverse esimate on growth at the infinity --1})
as follows. We can prove (\ref{eq: growth at infinity of weak solutions})
in a similar way. Let $u\in W^{1,p}(\mathbb{R}^{N})$ be a weak solution
to equation (\ref{eq: Object}) such that $u$ and $f(u)$ are nonnegative
in $\R^{N}\backslash B_{\rho}(0)$ with $\rho>0$. Then $u$ is a
supersolution to equation
\begin{eqnarray}
-\Delta_{p}w+m|w|^{p-2}w=0 &  & \text{in }\R^{N}\backslash B_{\rho}(0).\label{eq: equ. at infinity}
\end{eqnarray}
We construct a subsolution $v$ to equation (\ref{eq: equ. at infinity})
such that $v(x)\le u(x)$ on $\pa B_{\rho}(0)$ and that $v(x)\ge C|x|^{-\frac{N-1}{p(p-1)}}e^{-\left(\frac{m}{p-1}\right)^{\frac{1}{p}}|x|}$
in $\R^{N}\backslash B_{\rho}(0)$. Then it follows from the comparison
principle that $u\ge v$ in $\R^{N}\backslash B_{\rho}(0)$, which
proves (\ref{eq: inverse esimate on growth at the infinity --1}).

The paper is organized as follows. We prove Theorem \ref{thm: Asymptotic behavior of GS}
and Theorem \ref{thm: Asym. Beha. of GS at infinity} in Section 2,
Theorem \ref{thm: results for weak solutions at origin} in Section
3 and Theorem \ref{thm: results for weak solutions at infinity} in
Section 4.

Our notations are standard. $B_{R}(x)$ is the open ball in $\R^{N}$
centered at $x$ with radius $R>0$ and $B_{R}^{c}(x)=\R^{N}\backslash B_{R}(x)$.
We write
\[
\fint_{E}u=\frac{1}{|E|}\int_{E}u,
\]
whenever $E\subset\R^{N}$ is a Lebesgue measurable set and $|E|$,
the $n$-dimensional Lebesgue measure of set $E$, is positive and
finite. Let $\Om$ be an arbitrary domain in $\R^{N}$. We denote
by $C_{0}^{\infty}(\Om)$ the space of smooth functions with compact
support in $\Om$. For any $1\le s\le\infty$, $L^{s}(\Om)$ is the
Banach space of Lebesgue measurable functions $u$ such that the norm
\[
\|u\|_{s,\Om}=\begin{cases}
\left(\int_{\Om}|u|^{s}\right)^{\frac{1}{s}} & \text{if }1\le s<\infty\\
\esssup_{\Om}|u| & \text{if }s=\infty
\end{cases}
\]
is finite. A function $u$ belongs to the Sobolev space $W^{1,s}(\Om)$
if $u\in L^{s}(\Om)$ and its first order weak partial derivatives
also belong to $L^{s}(\Om)$. We endow $W^{1,s}(\Om)$ with the norm
\[
\|u\|_{1,s,\Om}=\|u\|_{s,\Om}+\|\na u\|_{s,\Om}.
\]
For the properties of the Sobolev functions, we refer to the monograph
\cite{Ziemer}. By abuse of notation, if $u$ is a radially symmetric
function in $\R^{N}$, we write $u(x)=u(r)$ with $r=|x|$.

\section{Proofs of Theorem \ref{thm: Asymptotic behavior of GS} and Theorem
\ref{thm: Asym. Beha. of GS at infinity}}

We prove Theorem \ref{thm: Asymptotic behavior of GS} and Theorem
\ref{thm: Asym. Beha. of GS at infinity} in this section. In the
case when $\mu=0$, Theorem \ref{thm: Asymptotic behavior of GS}
can be proved easily, and Theorem \ref{thm: Asym. Beha. of GS at infinity}
was proved in \cite{LZ2005}. So in this section we always assume
that $0<\mu<\bar{\mu}$.

Let $u\in W^{1,p}(\R^{N})$ be a positive radial weak solution of
equation (\ref{eq: Object}). By abuse of notation, we write $u(x)=u(r)$
with $r=|x|$. Then since $u\in W^{1,p}(\R^{N})$, we have
\begin{equation}
\int_{0}^{\wq}\left(|u(r)|^{p}+|u^{\prime}(r)|^{p}\right)r^{N-1}=\frac{1}{\om_{N-1}}\int_{\R^{N}}\left(|u|^{p}+|\na u|^{p}\right)<\wq,\label{eq: polar coordinate formular}
\end{equation}
where $\om_{N-1}$ is the surface measure of the unit sphere in $\R^{N}$,
and $u$ is a weak solution to the following ordinary differential
equation
\begin{equation}
\begin{cases}
{\displaystyle -\left(r^{N-1}|u^{\prime}|^{p-2}u^{\prime}\right)^{\prime}=r^{N-1}\left(\frac{\mu}{r^{p}}u^{p-1}-mu^{p-1}+f(u)\right)}, & r>0,\\
u(r)>0\quad\text{ for }r>0.
\end{cases}\label{eq: ODE 1}
\end{equation}

Before proving Theorem \ref{thm: Asymptotic behavior of GS} and Theorem
\ref{thm: Asym. Beha. of GS at infinity}, we remark that in fact
both $u$ and $r^{N-1}|u^{\prime}|^{p-2}u^{\prime}$ are continuously
differentiable in $(0,\wq)$, and equation (\ref{eq: ODE 1}) can
be understood in the classical sense. Indeed, it is well known that
every radially symmetric function in $W^{1,p}(\R^{N})$, after modifying
on a set of measure zero, is a continuous function in $(0,\wq)$.
Then by the continuity of $f$, we deduce that $r^{N-1}\left(\frac{\mu}{r^{p}}u^{p-1}-mu^{p-1}+f(u)\right)\in C(0,\wq)$,
which implies by equation (\ref{eq: ODE 1}) that $r^{N-1}|u^{\prime}|^{p-2}u^{\prime}\in C^{1}(0,\wq)$.
Thus equation (\ref{eq: ODE 1}) can be understood in the classical
sense.

\subsection{Proof of Theorem \ref{thm: Asymptotic behavior of GS}. }

We prove Theorem \ref{thm: Asymptotic behavior of GS} now. We start
the proof by claiming that
\begin{eqnarray}
u^{\prime}(r)<0 &  & \text{ for \ensuremath{r}sufficiently small. }\label{eq: decreasing property}
\end{eqnarray}
Indeed, note that since $u\in W^{1,p}(\R^{N})$ is a radial function,
we have by \cite[Corollary II.1]{Lions1982} that
\begin{eqnarray*}
u(r)r^{\frac{N-p}{p}}=o(1) &  & \text{as }r\to0.
\end{eqnarray*}
Then by (\ref{eq: condition of f at origin}), (\ref{eq: condition of f at infiinity})
and the above estimate, we have
\begin{eqnarray}
\left|\frac{f(u(r))r^{p}}{u^{p-1}(r)}\right|\le Cr^{p}\left(1+u^{p^{*}-p}(r)\right)=o(1) &  & \text{as }r\rightarrow0.\label{eq: estimate of f over u}
\end{eqnarray}
Hence
\begin{eqnarray*}
\frac{\mu}{r^{p}}-m+\frac{f(u)}{u^{p-1}}=\frac{1}{r^{p}}\left(\mu-mr^{p}+\frac{f(u(r))r^{p}}{u^{p-1}(r)}\right)>\frac{\mu}{2r^{p}}>0 &  & \mbox{for }r\text{ small enough}.
\end{eqnarray*}
Therefore $\left(r^{N-1}|u^{\prime}|^{p-2}u^{\prime}\right)^{\prime}<0$
for $r$ small enough by equation (\ref{eq: ODE 1}). Hence $r^{N-1}|u^{\prime}|^{p-2}u^{\prime}$
is strictly decreasing for $r$ small enough. So we can assume that
$\lim_{r\to0}r^{N-1}|u^{\prime}|^{p-2}u^{\prime}=a$ for some $a\in(-\wq,\wq]$.
We will prove that $a=0$. Suppose, on the contrary, that $a\ne0$.
Then there exist constants $C,r_{0}>0$ such that $|u^{\prime}(r)|\ge Cr^{-\frac{N-1}{p-1}}$
for $0<r<r_{0}$. Then we have
\[
\int_{0}^{r_{0}}|u^{\prime}(r)|^{p}r^{N-1}\ge C\int_{0}^{r_{0}}r^{-\frac{N-1}{p-1}}=\wq.
\]
We reach a contradiction to (\ref{eq: polar coordinate formular}).
Hence $a=0$. Therefore $r^{N-1}|u^{\prime}|^{p-2}u^{\prime}<0$ for
$r$ small enough. This proves (\ref{eq: decreasing property}).

Consider the function
\begin{eqnarray}
w(r)=-\frac{r^{p-1}|u^{\prime}(r)|^{p-2}u^{\prime}(r)}{u^{p-1}(r)} &  & \text{for }r>0.\label{eq: definition of w}
\end{eqnarray}
Then $w\in C^{1}(0,\wq)$, $w(r)>0$ for $r>0$ small enough by (\ref{eq: decreasing property}),
and $w$ satisfies
\begin{equation}
w^{\prime}(r)=\frac{1}{r}\Gamma_{\mu}\left(w^{\frac{1}{p-1}}(r)\right)+r^{p-1}\left(-m+\frac{f(u)}{u^{p-1}}\right).\label{eq:equa.of w}
\end{equation}
Recall that $\Gamma_{\mu}$ is defined as in (\ref{eq: definition of Gamma-mu}).
To prove Theorem \ref{thm: Asymptotic behavior of GS}, it is enough
to prove that
\begin{eqnarray}
w(r)=\ga_{1}^{p-1}+o(r^{\de}), &  & \text{as }r\to0,\label{eq: estimate of w}
\end{eqnarray}
for some $\de\in(0,1)$.

First, we prove that $\lim_{r\rightarrow0}w(r)$ exists and
\begin{equation}
\lim_{r\rightarrow0}w(r)=\gamma_{1}^{p-1}.\label{eq: limit of w at zero}
\end{equation}
To prove that $\lim_{r\rightarrow0}w(r)$ exists, we suppose, on the
contrary, that
\[
\beta\equiv\limsup_{r\rightarrow0}w>\liminf_{r\rightarrow0}w\equiv\alpha.
\]
Then there exist two sequences of positive numbers $\{\xi_{i}\}$
and $\{\eta_{i}\}$ such that $\xi_{i}\rightarrow0$ and $\eta_{i}\rightarrow0$
and that $\eta_{i}>\xi_{i}>\eta_{i+1}$ for all $i=1,2,\cdots$. Moreover,
the function $w$ has a local maximum at $\xi_{i}$ and a local minimum
at $\eta_{i}$ for all $i=1,2,\cdots$, and
\begin{eqnarray*}
\lim_{i\to\wq}w(\xi_{i})=\beta, &  & \lim_{i\to\wq}w(\eta_{i})=\alpha.
\end{eqnarray*}
Note that $w^{\prime}(\xi_{i})=w^{\prime}(\eta_{i})=0$. By equation
(\ref{eq:equa.of w}), we have that
\[
\Gamma_{\mu}\Big(w^{\frac{1}{p-1}}(\xi_{i})\Big)-m\xi_{i}^{p}+\frac{f(u(\xi_{i}))\xi_{i}^{p}}{u^{p-1}(\xi_{i})}=0,
\]
and that
\[
\Gamma_{\mu}\Big(w^{\frac{1}{p-1}}(\eta_{i})\Big)-m\eta_{i}^{p}+\frac{f(u(\eta_{i}))\eta_{i}^{p}}{u^{p-1}(\eta_{i})}=0.
\]
By (\ref{eq: estimate of f over u}) and the above two equalities,
\[
\lim_{i\rightarrow\infty}\Gamma_{\mu}\Big(w^{\frac{1}{p-1}}(\xi_{i})\Big)=\lim_{i\rightarrow\infty}\Gamma_{\mu}\Big(w^{\frac{1}{p-1}}(\eta_{i})\Big)=0.
\]
Since $\Gamma_{\mu}(s)\rightarrow\infty$ as $s\rightarrow\infty$,
$\{w(\xi_{i})\}$ and $\{w(\eta_{i})\}$ are bounded. So $\al,\be$
are finite and
\[
\Gamma_{\mu}\big(\beta^{\frac{1}{p-1}}\big)=\Gamma_{\mu}\big(\alpha^{\frac{1}{p-1}}\big)=0.
\]
Recall that $\Gamma_{\mu}(\ga)=0$ if and only if $\ga=\ga_{1}$ or
$\ga=\ga_{2}$. Recall also that $\ga_{1}<\ga_{2}$ (see (\ref{eq: gamma 1 and gamma 2})
for the definition of $\ga_{1}$ and $\ga_{2}$). Hence
\begin{eqnarray*}
\beta=\gamma_{2}^{p-1} & \text{and} & \al=\ga_{1}^{p-1}.
\end{eqnarray*}
That is,
\begin{eqnarray*}
\lim_{i\to\wq}w(\xi_{i})=\gamma_{2}^{p-1} & \text{ and } & \lim_{i\to\wq}w(\eta_{i})=\gamma_{1}^{p-1}.
\end{eqnarray*}
Note that $\ga_{1}<(N-p)/p<\ga_{2}$. So there exists $\zeta_{i}\in(\eta_{i+1},\xi_{i})$
such that
\[
w(\eta_{i+1})<w(\zeta_{i})=\left(\frac{N-p}{p}\right)^{p-1}<w(\xi_{i})
\]
for $i$ large enough. Then by (\ref{eq: estimate of f over u}) and
equation (\ref{eq:equa.of w}), we obtain that
\[
\zeta_{i}w^{\prime}(\zeta_{i})=\Gamma_{\mu}\left(\frac{N-p}{p}\right)-m\zeta_{i}^{p}+\frac{f(u(\zeta_{i}))\zeta_{i}^{p}}{u^{p-1}(\zeta_{i})}=-(\bar{\mu}-\mu)+o(1)<0
\]
for $i$ large enough. Here we used the fact that
\[
\Gamma_{\mu}\left(\frac{N-p}{p}\right)=-(\bar{\mu}-\mu).
\]
Hence $w^{\prime}(\zeta_{i})<0$ for $i$ large enough. Therefore
$w$ is strictly decreasing in a neighborhood of $\zeta_{i}$. Since
$\zeta_{i}<\xi_{i}$ and $w(\zeta_{i})<w(\xi_{i})$, there exists
$\zeta_{i}<\zeta_{i}^{\prime}<\xi_{i}$ such that $w(r)\le w(\zeta_{i})$
for $\zeta_{i}<r<\zeta_{i}^{\prime}$ and $w(\zeta_{i}^{\prime})=w(\zeta_{i})$.
Thus $w^{\prime}(\zeta_{i}^{\prime})\ge0$. However, by equation (\ref{eq:equa.of w}),
we have that $w^{\prime}(\zeta_{i}^{\prime})<0$. We reach a contradiction.
Therefore $\lim_{r\rightarrow0}w(r)$ exists.

Set $k^{p-1}=\lim_{r\rightarrow0}w(r)$. We will prove that $k=\ga_{1}$.

We claim that $k\le(N-p)/p$. Otherwise, choose $\ep>0$ such that
$k-\ep>(N-p)/p$. Then for $r$ small enough we have $w(r)>(k-\ep)^{p-1}$,
that is, $-ru^{\prime}(r)/u(r)>k-\ep$ for $r$ small enough. This
implies that $u(r)\ge Cr^{\ep-k}$ for $r$ small enough, which implies
$u\not\in L^{p^{*}}(B_{1}(0))$. We reach a contradiction. Thus $k\le(N-p)/p$.

By (\ref{eq: estimate of f over u}) and equation (\ref{eq:equa.of w}),
we have that
\[
\lim_{r\to0}rw^{\prime}(r)=\Ga_{\mu}(k).
\]
We claim that $\Ga_{\mu}(k)=0$. Otherwise, suppose that $\Ga_{\mu}(k)\ne0$.
Note that for any $0<s<s_{0}$, we have
\[
w(s_{0})=w(s)+\int_{s}^{s_{0}}w^{\prime}.
\]
Then $\Ga_{\mu}(k)\ne0$ implies that $\lim_{s\to0}\left|\int_{s}^{s_{0}}w^{\prime}\right|=\wq$
if $s_{0}$ is small enough. This contradicts to (\ref{eq: limit of w at zero}).
Hence $\Ga_{\mu}(k)=0$. Recall that $\Ga_{\mu}(\ga)=0$ if and only
if $\ga=\ga_{1}$ or $\ga=\ga_{2}$. Thus we have either $k=\ga_{1}$
or $k=\ga_{2}$. Then we deduce that $k=\ga_{1}$ since $k\le(N-p)/p<\ga_{2}$.
This proves (\ref{eq: limit of w at zero}).

As a result, (\ref{eq: limit of w at zero}) implies that for any
$\ep>0$ sufficiently small there exist $C,C^{\prime}>0$ such that
\[
C^{\prime}r^{-\ga_{1}+\ep}\le u(r)\le Cr^{-\ga_{1}-\ep}
\]
for $r>0$ small enough. Choose $\ep=\ep_{0}>0$ such that $p-(p^{*}-p)(\ga_{1}+\ep_{0})>0$.
Applying (\ref{eq: estimate of f over u}), we obtain that
\begin{equation}
\left|\frac{f(u(r))r^{p}}{u^{p-1}(r)}\right|\le Cr^{p}\left(1+u^{p^{*}-p}(r)\right)\le Cr^{p-(p^{*}-p)(\ga_{1}+\ep_{0})}\equiv Cr^{\de_{0}}\label{eq: 3.1.8}
\end{equation}
for $r>0$ small enough. Here $\de_{0}\equiv p-(p^{*}-p)(\ga_{1}+\ep_{0})>0$.

Now we prove (\ref{eq: estimate of w}). Let $w_{1}(r)=w(r)-\ga_{1}^{p-1}$.
Then $w_{1}(r)\to0$ as $r\to0$. We prove that $w_{1}(r)=o(r^{\de})$
as $r\to0$ for some $\de>0$.

By equation (\ref{eq:equa.of w}) and the definition of $\Ga_{\mu}$
(see (\ref{eq: definition of Gamma-mu})), we have
\begin{equation}
\begin{aligned}w_{1}^{\prime}(r) & =w^{\prime}(r)=\frac{1}{r}\Gamma_{\mu}\left(w^{\frac{1}{p-1}}(r)\right)+r^{p-1}\left(-m+\frac{f(u(r))}{u^{p-1}(r)}\right)\\
 & =\frac{1}{r}\left((p-1)w^{\frac{p}{p-1}}(r)-(N-p)w(r)+\mu\right)-mr^{p-1}+\frac{f(u(r))r^{p-1}}{u^{p-1}(r)}\\
 & =\frac{A(r)}{r}w_{1}(r)+B(r),
\end{aligned}
\label{eq: equation of w_1}
\end{equation}
for $r$ small enough, where $A(r)\to p\ga_{1}-(N-p)<0$ as $r\to0$
and
\begin{equation}
B(r)=-mr^{p-1}+\frac{f(u(r))r^{p-1}}{u^{p-1}(r)}=O\left(r^{\de_{0}-1}\right)\qquad\text{ as }r\to0,\label{eq: function B}
\end{equation}
by (\ref{eq: 3.1.8}). Here $\de_{0}>0$ is as in (\ref{eq: 3.1.8}).

Fix $r_{0}>0$ small and define $h(r)=\int_{r}^{r_{0}}A(\tau)\tau^{-1}d\tau$
for $0<r<r_{0}$. Since $w_{1}$ is a solution to equation (\ref{eq: equation of w_1}),
it has the following form
\[
w_{1}(r)=\int_{0}^{r}e^{h(s)-h(r)}B(s)ds.
\]
Since $h(s)-h(r)=\int_{s}^{r}A(\tau)\tau^{-1}d\tau<0$ for $0<s<r$,
we obtain that $e^{h(s)-h(r)}\le1$ for $0<s<r$. Hence by (\ref{eq: function B}),
we have for $r$ small enough that
\[
|w_{1}(r)|\le\int_{0}^{r}|B|\le Cr^{\de_{0}}.
\]
Here $\de_{0}>0$ is as in (\ref{eq: 3.1.8}). This proves (\ref{eq: estimate of w}).

Recall that $w$ is defined as (\ref{eq: definition of w}). The conclusion
of Theorem \ref{thm: Asymptotic behavior of GS} follows easily from
estimate (\ref{eq: estimate of w}). The proof of Theorem \ref{thm: Asymptotic behavior of GS}
is complete.

We remark here that the proof of Theorem \ref{thm: Asymptotic behavior of GS}
above works for all $m\in\R$.

\subsection{Proof of Theorem \ref{thm: Asym. Beha. of GS at infinity}.}

Following the argument of Li and Zhao \cite{LZ2005}, we have the
following more precise result which implies Theorem \ref{thm: Asym. Beha. of GS at infinity}.
\begin{thm}
\label{thm: precise estimate of GS at infinity} Assume that $m>0$,
$0\le\mu<\bar{\mu}=\left((N-p)/p\right)^{p}$ and that $f\in C(\R)$
satisfies (\ref{eq: condition of f at origin}). Let $u\in W^{1,p}(\mathbb{R}^{N})$
be a positive radial weak solution to equation (\ref{eq: Object})
and let $k$ be the integer such that $k\le p<k+1$. Then $u^{\prime}(r)<0$
for $r$ large enough, and
\begin{eqnarray}
\left(-\frac{u^{\prime}(r)}{u(r)}\right)^{p-1}=\sum_{i=0}^{k}\frac{c_{i}}{r^{i}}-\frac{((p-1)/m)^{1/p}\mu}{pr^{p}}+O\left(\frac{1}{r^{k+1}}\right) &  & \text{as }r\to\infty,\label{eq: u'/u expansion}
\end{eqnarray}
 where
\begin{eqnarray*}
c_{0}=\left(\frac{m}{p-1}\right)^{\frac{p-1}{p}}, &  & c_{1}=\frac{N-1}{p}\left(\frac{m}{p-1}\right)^{\frac{p-2}{p}},
\end{eqnarray*}
and $\{c_{i}\}_{i=2}^{k}$ are determined uniquely by
\[
(N-i)c_{i-1}-p\left(\frac{m}{p-1}\right)^{\frac{1}{p}}c_{i}=\sum_{n=2}^{i}\frac{F^{(n)}(0)}{n!}\sum_{\underset{{\scriptscriptstyle {\displaystyle {\scriptstyle j_{1},\cdots,j_{n}>0}}}}{{\scriptstyle j_{1}+\cdots+j_{n}=i}}}c_{j_{1}}c_{j_{2}}\cdots c_{j_{n}},
\]
where $F^{(n)}(0)$ is the $n$-th derivative of the function $F(t)=(p-1)(c_{0}+t)^{\frac{p}{p-1}}$
at $t=0$.
\end{thm}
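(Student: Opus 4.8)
The plan is to follow the argument of Li and Zhao \cite{LZ2005}. First, arguing as in the proof of Theorem \ref{thm: Asymptotic behavior of GS} but near infinity, one shows that $u'(r)<0$ for $r$ large. Indeed, since $u$ is radial and lies in $W^{1,p}(\R^{N})$ we have $u(r)\to0$, so by (\ref{eq: condition of f at origin}) $f(u)/u^{p-1}\to0$ and $\mu/r^{p}-m+f(u)/u^{p-1}\to-m<0$; by (\ref{eq: ODE 1}) this makes $r^{N-1}|u'|^{p-2}u'$ eventually strictly increasing, and were it nonnegative at some large $r_{1}$ then $u$ would be nondecreasing on $[r_{1},\infty)$, contradicting $u>0$ and $u\to0$. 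Hence $v(r):=(-u'(r)/u(r))^{p-1}>0$ is well defined for large $r$, and dividing the identity (\ref{eq:equa.of w}) for $w=r^{p-1}v$ by $r^{p-1}$ gives, for $r$ large,
\[
v'(r)=(p-1)v(r)^{\frac{p}{p-1}}-\frac{N-1}{r}v(r)-m+\frac{\mu}{r^{p}}+\frac{f(u(r))}{u^{p-1}(r)}.
\]
By an argument analogous to the one carried out for $w$ near the origin in Theorem \ref{thm: Asymptotic behavior of GS} — the equation keeps $v$ bounded and bounded away from $0$, and evaluating it at local extrema $\xi_{i},\eta_{i}\to\infty$, where $v'=0$ and the terms $\frac{N-1}{r}v$, $\frac{\mu}{r^{p}}$, $\frac{f(u)}{u^{p-1}}$ tend to $0$, forces every limit value of $v$ to solve $(p-1)\la^{p/(p-1)}=m$ — one gets $v(r)\to c_{0}$ as $r\to\infty$, where $c_{0}=(m/(p-1))^{(p-1)/p}$. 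In particular $-u'/u\to c_{0}^{1/(p-1)}>0$, so $u$ decays at least exponentially and $|f(u)/u^{p-1}|\le C|u|^{q-p}=O(e^{-\de r})$ for some $\de>0$; thus $f$ makes no contribution to the polynomial part of the expansion, which is why only (\ref{eq: condition of f at origin}) is needed here.

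To obtain (\ref{eq: u'/u expansion}), write $v=c_{0}+\psi$ with $\psi\to0$ and set $F(t)=(p-1)(c_{0}+t)^{p/(p-1)}$, so that $F(0)=m$, $F'(0)=p\,c_{0}^{1/(p-1)}=p(m/(p-1))^{1/p}>0$, and $(p-1)v^{p/(p-1)}-m=F'(0)\psi+\sum_{n\ge2}\frac{F^{(n)}(0)}{n!}\psi^{n}$. The $v$-equation becomes $\psi'=F'(0)\psi+G$ with
\[
G(r)=\sum_{n\ge2}\frac{F^{(n)}(0)}{n!}\psi(r)^{n}-\frac{(N-1)c_{0}}{r}-\frac{N-1}{r}\psi(r)+\frac{\mu}{r^{p}}+\frac{f(u(r))}{u^{p-1}(r)},
\]
and since $F'(0)>0$ and $\psi\to0$ the only admissible solution is $\psi(r)=-\int_{r}^{\infty}e^{-F'(0)(s-r)}G(s)\,ds$. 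Using the elementary expansion $\int_{r}^{\infty}s^{-j}e^{-\la(s-r)}\,ds=\la^{-1}r^{-j}+O(r^{-j-1})$ (indeed its full asymptotic series in powers of $1/r$), one bootstraps: from $\psi=o(1)$ one obtains $\psi=c_{1}/r+O(1/r^{2})$ with $c_{1}=(N-1)c_{0}/F'(0)=\frac{N-1}{p}(m/(p-1))^{(p-2)/p}$, and feeding each successive expansion back into $G$ and integrating produces the next term. Matching the coefficient of $1/r^{i}$ for $2\le i\le k$, the contributions $-(i-1)c_{i-1}$ from $\psi'$ and $-(N-1)c_{i-1}$ from $-\frac{N-1}{r}\psi$ combine with $F'(0)c_{i}$ and the convolution sums from $\sum_{n\ge2}\frac{F^{(n)}(0)}{n!}\psi^{n}$ to yield precisely
\[
(N-i)c_{i-1}-p(m/(p-1))^{1/p}c_{i}=\sum_{n=2}^{i}\frac{F^{(n)}(0)}{n!}\sum_{\substack{j_{1}+\cdots+j_{n}=i\\ j_{1},\dots,j_{n}>0}}c_{j_{1}}\cdots c_{j_{n}},
\]
which determines $c_{i}$ uniquely because $F'(0)\ne0$. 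The only source of a non-integer power is $\mu/r^{p}$, which contributes exactly $-\mu/(p\,c_{0}^{1/(p-1)}r^{p})=-((p-1)/m)^{1/p}\mu/(pr^{p})$; since $k\le p<k+1$, all later non-integer powers ($1/r^{p+1}$, $1/r^{2p}$, and so on) and all integer powers beyond $1/r^{k}$ are $O(1/r^{k+1})$, so truncating the bootstrap at order $k+1$ gives (\ref{eq: u'/u expansion}). Integrating the resulting expansion of $-u'/u=v^{1/(p-1)}$ then recovers Theorem \ref{thm: Asym. Beha. of GS at infinity}.

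The main obstacle is to turn this formal bootstrap into a rigorous statement with a genuine $O(1/r^{k+1})$ remainder: one must control uniformly the nonlinear terms $\psi^{n}$, the cross terms created by $-\frac{N-1}{r}\psi$ and by $\mu/r^{p}$, and the exact remainders in the asymptotics of $\int_{r}^{\infty}s^{-j}e^{-\la(s-r)}\,ds$, for instance by a contraction or Gronwall argument for $v-\bar v$, where $\bar v$ is the explicit truncated expansion with coefficients given by the recursion. A secondary delicate point is the first step, namely establishing $v\to c_{0}$ together with the exponential decay of $u$: $c_{0}$ is an unstable equilibrium of the frozen equation, so the argument must use the positivity of $v$ (equivalently, the integrability of $u$) to rule out the runaway solutions.
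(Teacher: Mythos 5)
Your proposal follows essentially the same route as the paper: the same Riccati variable $v=\phi=(-u'/u)^{p-1}$, the same limit $v\to c_{0}$, the same linearization around $c_{0}$, and the same bootstrap through a Volterra integral equation yielding the same recursion for the $c_{i}$ and the same $\mu/r^{p}$ correction. Two remarks on the parts you flagged as delicate. First, the boundedness of $v$ (the paper's $\limsup_{r\to\infty}\phi<\infty$) is not established via positivity or the integrability of $u$, as your parenthetical suggests; the paper uses a Riccati finite-time blowup argument: once $\phi$ exceeds a fixed threshold, equation (\ref{eq:ODE of phi}) gives $\phi'\ge\frac{p-1}{4}\phi^{p/(p-1)}$, which would force $\phi$ to blow up at a finite radius, contradicting that $\phi\in C^{1}(0,\infty)$. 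Second, to seed the bootstrap the paper first multiplies the linearized equation by $\phi_{1}$ and integrates to obtain the a priori rate $\phi_{1}^{2}(r)=O(1/r)$ before passing to the integral representation; it also uses the integrating factor $r^{N-1}e^{-\alpha_{0}r}$, which removes the $\frac{N-1}{r}\phi_{1}$ term rather than leaving it in the source, as in your choice of Green's function $e^{-\alpha_{0}(s-r)}$. Your alternative of running a contraction directly on the Volterra equation is legitimate, but the contraction constant involves $\sup|\psi|$, so it still requires the input $\psi=o(1)$ before the first quantitative rate can be extracted; once that is spelled out, your bootstrap and the paper's are the same.
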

We remark that $u(r)\to0$ as $r\to\wq$ since $u\in W^{1,p}(\R^{N})$
is a radially symmetric function. We follow the argument of Li and
Zhao \cite{LZ2005} to prove Theorem \ref{thm: precise estimate of GS at infinity},
with some modifications.

\begin{proof}[Proof of Theorem \ref{thm: precise estimate of GS at infinity}]
We start the proof by claiming that
\begin{eqnarray}
u^{\prime}(r)<0 &  & \text{ for }r\text{ large enough.}\label{eq: decreasing property at infinity}
\end{eqnarray}
Indeed, we have by (\ref{eq:ground state condition}) and (\ref{eq: condition of f at origin})
that
\[
\frac{\mu}{r^{p}}u^{p-1}-mu^{p-1}+f(u)=u^{p-1}\left(\frac{\mu}{r^{p}}-m+\frac{f(u)}{u^{p-1}}\right)\le-\frac{m}{2}u^{p-1}<0
\]
for $r$ large. Hence $\left(r^{N-1}|u^{\prime}|^{p-2}u^{\prime}\right)^{\prime}>0$
for $r$ large. Thus $r^{N-1}|u^{\prime}|^{p-2}u^{\prime}$ increases
to a limit $l\le\infty$ as $r\to\infty$. We prove that $l\le0$.
Otherwise, if $l>0$, then $u^{\prime}(r)>0$ for $r$ large enough.
Since $u(r)\to0$ as $r\to0$, we have $u(r)<0$ for $r$ large enough.
We obtain a contradiction, since we assume that $u$ is a positive
solution in the theorem. Hence $l\le0$, and then $r^{N-1}|u^{\prime}|^{p-2}u^{\prime}<0$
for $r$ large. This proves (\ref{eq: decreasing property at infinity}).

Now consider the function
\begin{eqnarray*}
\phi(r)=-\frac{|u^{\prime}(r)|^{p-2}u^{\prime}(r)}{u^{p-1}(r)} &  & \text{for }r>0.
\end{eqnarray*}
Then $\phi\in C^{1}(0,\wq)$, $\phi(r)>0$ for $r$ large enough by
(\ref{eq: decreasing property at infinity}) and $\phi$ satisfies
the equation
\begin{equation}
\phi^{\prime}=(p-1)\phi^{\frac{p}{p-1}}-\frac{N-1}{r}\phi+\frac{\mu}{r^{p}}-m+\frac{f(u)}{u^{p-1}}.\label{eq:ODE of phi}
\end{equation}
It follows from (\ref{eq: condition of f at origin}) that $f(u)/u^{p-1}=O(u^{\delta})$
as $r\to\infty$. Here $\delta=q-p>0$.

We study the asymptotic behavior of $\phi$ at the infinity. First,
we claim that
\begin{equation}
\limsup_{r\to\wq}\phi(r)<\wq.\label{eq: limsup is finite}
\end{equation}
Indeed, note that by Young's inequality,
\[
\frac{N-1}{r}\phi\le\frac{p-1}{2}\phi^{\frac{p}{p-1}}+\frac{C_{N,p}}{r^{p}}
\]
for a constant $C_{N,p}>0$. Hence by (\ref{eq:ODE of phi}) we have
\begin{equation}
\phi^{\prime}(r)\ge\frac{p-1}{2}\phi^{\frac{p}{p-1}}(r)-m+\frac{\mu-C_{N,p}}{r^{p}}+\frac{f(u(r))}{u^{p-1}(r)}.\label{eq:ODE of phi 2}
\end{equation}
Note that
\begin{eqnarray*}
\frac{\mu-C_{N,p}}{r^{p}}+\frac{f(u(r))}{u^{p-1}(r)}\to0 &  & \text{as }r\to\wq.
\end{eqnarray*}
Hence there is a constant $K>0$ such that by (\ref{eq:ODE of phi 2}),
we have
\begin{equation}
\phi^{\prime}(r)\ge\frac{p-1}{2}\phi^{\frac{p}{p-1}}(r)-K\label{eq: reduced equation of phi}
\end{equation}
for $r$ large enough. Suppose, on the contrary, that $\limsup_{r\to\wq}\phi(r)=\wq.$
Then there exists $r_{0}>1$ large enough such that $\phi(r_{0})>\left(\frac{4K}{p-1}\right)^{\frac{p-1}{p}}$,
that is, $\frac{p-1}{2}\phi^{\frac{p}{p-1}}(r_{0})>2K$. Then we have
$\phi^{\prime}(r_{0})\ge K>0$. This implies that $\phi$ is an increasing
function in a neighborhood of $r_{0}$. Hence there exists $\ep>0$
such that $\phi(s)\ge\phi(r_{0})$ for all $s\in[r_{0},r_{0}+\ep]$.
Let
\[
r_{1}=\sup\left\{ r;\: r>r_{0}\text{ and }\phi(s)\ge\phi(r_{0})\text{ for all }s\in[r_{0},r]\right\} .
\]
Then $r_{1}\ge r_{0}+\ep$. We prove that $r_{1}=\wq$. Otherwise,
suppose that $r_{1}<\wq$. Then we have that $\phi(s)\ge\phi(r_{0})$
for all $s\in[r_{0},r_{1}]$ and $\phi(r_{1})=\phi(r_{0})$. This
implies that $\phi^{\prime}(r_{1})\le0$. However, by (\ref{eq: reduced equation of phi}),
we have
\[
\phi^{\prime}(r_{1})\ge\frac{p-1}{2}\phi^{\frac{p}{p-1}}(r_{1})-K=\frac{p-1}{2}\phi^{\frac{p}{p-1}}(r_{0})-K\ge K>0.
\]
We reach a contradiction. Hence $r_{1}=\wq$. That is, $\phi(r)\ge\phi(r_{0})>\left(\frac{4K}{p-1}\right)^{\frac{p-1}{p}}$
for all $r\ge r_{0}$. Then we can deduce from (\ref{eq: reduced equation of phi})
that
\begin{eqnarray}
\phi^{\prime}(r)\ge\frac{p-1}{4}\phi^{\frac{p}{p-1}}(r) &  & \text{for }r>r_{0}.\label{eq:ODE of phi 3}
\end{eqnarray}
Solving equation (\ref{eq:ODE of phi 3}) gives us a number $r_{2}=4\phi^{-1/(p-1)}(r_{0})+r_{0}<\wq$
such that
\begin{eqnarray*}
\phi(r)\ge\left(\frac{4}{r_{2}-r}\right)^{p-1} &  & \text{for }r_{0}<r<r_{2}.
\end{eqnarray*}
Thus $\phi(r_{2})=\lim_{r\uparrow r_{2}}\phi(r)=\wq$. We reach a
contradiction. Hence $\limsup_{r\to\wq}\phi(r)<\wq$. This proves
(\ref{eq: limsup is finite}).

Second, we claim that
\begin{equation}
\lim_{r\rightarrow\infty}\phi(r)=\left(\frac{m}{p-1}\right)^{\frac{p-1}{p}}:=\phi_{\infty}.\label{eq: limit of phi}
\end{equation}
To prove that $\lim_{r\to\wq}\phi(r)$ exists, we suppose on the contrary
that
\[
\beta\equiv\limsup_{r\rightarrow\wq}\phi(r)>\liminf_{r\rightarrow\wq}\phi(r)\equiv\alpha.
\]
Then $\be<\wq$ by (\ref{eq: limsup is finite}) and there exist two
sequences of positive numbers $\{\xi_{i}\}$ and $\{\eta_{i}\}$ such
that $\xi_{i}\rightarrow\wq$ and $\eta_{i}\rightarrow\wq$. Moreover,
the function $\phi$ has a local maximum at $\xi_{i}$ and a local
minimum at $\eta_{i}$ for all $i=1,2,\cdots$, and
\begin{eqnarray*}
\lim_{i\to\wq}\phi(\xi_{i})=\beta, &  & \lim_{i\to\wq}\phi(\eta_{i})=\alpha.
\end{eqnarray*}
Note that $\phi^{\prime}(\xi_{i})=\phi^{\prime}(\eta_{i})=0$. By
equation (\ref{eq:ODE of phi}), we have that
\[
(p-1)\phi^{\frac{p}{p-1}}(\xi_{i})-\frac{N-1}{\xi_{i}}\phi(\xi_{i})+\frac{\mu}{\xi_{i}^{p}}-m+\frac{f(u(\xi_{i}))}{u^{p-1}(\xi_{i})}=0,
\]
and that
\[
(p-1)\phi^{\frac{p}{p-1}}(\eta_{i})-\frac{N-1}{\eta_{i}}\phi(\eta_{i})+\frac{\mu}{\eta_{i}^{p}}-m+\frac{f(u(\eta_{i}))}{u^{p-1}(\eta_{i})}=0.
\]
Letting $i\to\wq$, we obtain that
\begin{eqnarray*}
(p-1)\beta^{\frac{p}{p-1}}-m=0 & \text{and} & (p-1)\al^{\frac{p}{p-1}}-m=0.
\end{eqnarray*}
That is, $\al=\be=\left(\frac{m}{p-1}\right)^{\frac{p-1}{p}}$. We
reach a contradiction. Thus $\lim_{r\to\wq}\phi(r)$ exists.

Set $\phi_{\wq}=\lim_{r\to\wq}\phi(r)$. Then $\phi_{\wq}\ge0$. By
(\ref{eq: limsup is finite}) we have $\phi_{\wq}<\wq$. Letting $r\to\wq$
in equation (\ref{eq:ODE of phi}) yields that $\lim_{r\to\wq}\phi^{\prime}(r)=(p-1)\phi_{\wq}^{\frac{p}{p-1}}-m$.
We claim that $(p-1)\phi_{\wq}^{\frac{p}{p-1}}-m=0$. Otherwise, suppose
that $(p-1)\phi_{\wq}^{\frac{p}{p-1}}-m\ne0$. Note that for any $r>s$
we have
\[
\phi(r)=\phi(s)+\int_{s}^{r}\phi^{\prime}.
\]
Then $(p-1)\phi_{\wq}^{\frac{p}{p-1}}-m\ne0$ implies that $\lim_{r\to\wq}|\int_{s}^{r}\phi^{\prime}|=\wq$.
We reach a contradiction to (\ref{eq: limsup is finite}). Thus $(p-1)\phi_{\wq}^{\frac{p}{p-1}}-m=0$.
This proves (\ref{eq: limit of phi}).

By (\ref{eq: limit of phi}), we deduce that
\[
\lim_{r\to\wq}\frac{u^{\prime}}{u}=\lim_{r\to\wq}\left(-\phi^{\frac{1}{p-1}}\right)=-\left(\frac{m}{p-1}\right)^{\frac{1}{p}}.
\]
Therefore for any $m>\ep>0$, there exists a constant $C_{\ep}>0$
such that
\[
u(r)\le C_{\ep}e^{-\left(\frac{m-\ep}{p-1}\right)^{\frac{1}{p}}r}
\]
for $r$ large enough. Take $\ep=m/2$ and set $\delta_{1}=\left(\frac{{\displaystyle m}}{{\displaystyle 2(p-1)}}\right)^{\frac{1}{p}}$.
Then
\begin{eqnarray}
u(r)\le Ce^{-\delta_{1}r} &  & \text{for }r\text{ large enough}.\label{eq: 2.7}
\end{eqnarray}

Next, we give a precise expansion of $\phi(r)$ at infinity. Let $\phi_{1}=\phi-\phi_{\infty}$
and $F(t)=(p-1)(t+\phi_{\infty})^{p/(p-1)}$ for $t\ge0$. Equation
(\ref{eq:ODE of phi}) gives
\begin{equation}
\begin{aligned} & \phi_{1}^{\prime}-\al_{0}\phi_{1}+\frac{N-1}{r}\phi_{1}\\
= & F(\phi_{1})-F(0)-F'(0)\phi_{1}-\frac{\left(N-1\right)\phi_{\infty}}{r}+\frac{\mu}{r^{p}}+O(u^{\de}),
\end{aligned}
\label{eq: equation of phi 1}
\end{equation}
where $\al_{0}=p\phi_{\infty}^{1/(p-1)}$. Note that $F(0)=m$ and
$F^{\prime}(0)=\al_{0}$. Since $\phi_{1}(r)\to$0 as $r\to\wq$,
we have
\[
F(\phi_{1})-F(0)-F'(0)\phi_{1}=O(\phi_{1}^{2})
\]
as $r\to\wq$. Thus (\ref{eq: equation of phi 1}) is reduced to
\begin{equation}
\phi_{1}^{\prime}-\al_{0}\phi_{1}+\frac{N-1}{r}\phi_{1}=-\frac{\left(N-1\right)\phi_{\infty}}{r}+\frac{\mu}{r^{p}}+O(u^{\de})+O(\phi_{1}^{2}).\label{eq: phi 1.1}
\end{equation}
Multiply both sides of equation (\ref{eq: phi 1.1}) by $\phi_{1}$.
We have that
\[
\frac{1}{2}\phi_{1}^{2}(r)+\int_{r}^{\wq}\left(\al_{0}-\frac{N-1}{s}+O(\phi_{1})\right)\phi_{1}^{2}=\int_{r}^{\wq}\left(\frac{\left(N-1\right)\phi_{\infty}}{s}-\frac{\mu}{s^{p}}\right)\phi_{1}-\int_{r}^{\wq}O(u^{\de})\phi_{1}.
\]
We can take $r$ sufficiently large such that
\[
{\displaystyle \al_{0}-\frac{N-1}{s}+O(\phi_{1})\ge\frac{\al_{0}}{2}}\text{ and }{\displaystyle \frac{\left(N-1\right)\phi_{\infty}}{s}\ge\frac{\mu}{s^{p}}}\text{\quad\ for }s\ge r.
\]
Then
\[
\phi_{1}^{2}(r)+\al_{0}\int_{r}^{\wq}\phi_{1}^{2}\le2\int_{r}^{\wq}\frac{\left(N-1\right)\phi_{\infty}}{s}|\phi_{1}|-2\int_{r}^{\wq}O(u^{\de})\phi_{1}.
\]
Note that
\[
2\int_{r}^{\wq}\frac{\left(N-1\right)\phi_{\infty}}{s}|\phi_{1}|\le\frac{\al_{0}}{4}\int_{r}^{\wq}\phi_{1}^{2}+\frac{4\left(N-1\right)^{2}\phi_{\infty}^{2}}{\al_{0}}\int_{r}^{\wq}\frac{1}{s^{2}}
\]
and
\[
2\int_{r}^{\wq}O(u^{\de})\phi_{1}\le\frac{\al_{0}}{4}\int_{r}^{\wq}\phi_{1}^{2}+\frac{4}{\al_{0}}\int_{r}^{\wq}O(u^{2\de}).
\]
By virtue of the above two inequalities and (\ref{eq: 2.7}), we obtain
for sufficiently large $r$ that
\[
\phi_{1}^{2}(r)+\frac{\al_{0}}{2}\int_{r}^{\wq}\phi_{1}^{2}\le\frac{4\left(N-1\right)^{2}\phi_{\infty}^{2}}{\al_{0}}\frac{1}{r}+Ce^{-2\de\de_{1}r}\le\frac{8\left(N-1\right)^{2}\phi_{\infty}^{2}}{\al_{0}}\frac{1}{r}.
\]
Thus we have
\begin{eqnarray}
\phi_{1}^{2}(r)=O\left(\frac{1}{r}\right) &  & \text{as }r\to\wq.\label{eq: 2.15}
\end{eqnarray}
Combining (\ref{eq: 2.15}) and (\ref{eq: phi 1.1}) gives us
\begin{eqnarray}
\phi_{1}'(r)-\al_{0}\phi_{1}+\frac{N-1}{r}\phi_{1}=O\left(\frac{1}{r}\right) &  & \text{as }r\to\wq.\label{eq: phi 1.2}
\end{eqnarray}
Therefore we get from (\ref{eq: phi 1.2}) for $r$ sufficiently large
that
\begin{equation}
\left(r^{N-1}e^{-\al_{0}r}\phi_{1}(r)\right)^{\prime}=r^{N-1}e^{-\al_{0}r}O\left(\frac{1}{r}\right).\label{eq: phi 1.3}
\end{equation}
Integrate both sides of (\ref{eq: phi 1.3}). We obtain that for $r$
sufficiently large,
\begin{equation}
\phi_{1}(r)=\frac{e^{\al_{0}r}}{r^{N-1}}\int_{r}^{\wq}s^{N-1}e^{-\al_{0}s}O\left(\frac{1}{s}\right).\label{eq: phi 1.4}
\end{equation}
Then it follows from (\ref{eq: phi 1.4}) that
\begin{eqnarray}
\phi_{1}(r)=O\left(\frac{1}{r}\right) &  & \text{as }r\to\wq,\label{eq: 2.15-1}
\end{eqnarray}
which is an improvement of (\ref{eq: 2.15}). Using (\ref{eq: 2.15-1})
and (\ref{eq: phi 1.1}) we obtain for $r$ sufficiently large that
\begin{equation}
\left(r^{N-1}e^{-\al_{0}r}\phi_{1}(r)\right)^{\prime}=\frac{e^{\al_{0}r}}{r^{N-1}}\left(-\frac{\left(N-1\right)\phi_{\infty}}{r}+\frac{\mu}{r^{p}}+O\left(\frac{1}{r^{2}}\right)\right).\label{eq: phi 1.5}
\end{equation}
Integrate both sides of (\ref{eq: phi 1.5}). We obtain that for $r$
sufficiently large,
\begin{eqnarray*}
\phi_{1}(r) & = & \frac{e^{\al_{0}r}}{r^{N-1}}\int_{r}^{\wq}s^{N-1}e^{-\al_{0}s}\left(-\frac{\left(N-1\right)\phi_{\infty}}{s}+\frac{\mu}{s^{p}}+O\left(\frac{1}{s^{2}}\right)\right)\\
 & = & \frac{(N-1)\phi_{\wq}}{\al_{0}r}-\frac{\mu}{\al_{0}r^{p}}+O\left(\frac{1}{r^{2}}\right).
\end{eqnarray*}
Therefore we have that
\begin{equation}
\phi_{1}(r)=\begin{cases}
{\displaystyle \frac{(N-1)\phi_{\wq}}{\al_{0}r}-\frac{\mu}{\al_{0}r^{p}}+O\left(\frac{1}{r^{2}}\right)} & \text{if }1<p<2,\vspace{2mm}\\
{\displaystyle \frac{(N-1)\phi_{\wq}}{\al_{0}r}+O\left(\frac{1}{r^{2}}\right)} & \text{if }p\ge2.
\end{cases}\label{eq: the first expansion}
\end{equation}

Note that if $1<p<2$, the proof of Theorem \ref{thm: precise estimate of GS at infinity}
is finished.

Suppose now $p\ge2$. Let $\phi_{2}(r)=\phi_{1}(r)-\frac{(N-1)\phi_{\wq}}{\al_{0}r}$.
Then $\phi_{2}(r)=O(r^{-2})$ as $r\to\wq$. By the Taylor expansion
of function $F$ we have
\begin{eqnarray*}
F(\phi_{1})-F(0)-F'(0)\phi_{1} & = & \frac{1}{2}F^{\prime\prime}(0)\phi_{1}^{2}+O(\phi_{1}^{3})\\
 & = & \frac{F^{\prime\prime}(0)c_{1}^{2}}{2r^{2}}+O\left(\frac{1}{r^{3}}\right),
\end{eqnarray*}
where $c_{1}=(N-1)\phi_{\wq}/\al_{0}$. Thus by (\ref{eq: equation of phi 1})
and (\ref{eq: phi 1.1}), it follows that
\[
\phi_{2}^{\prime}-\al_{0}\phi_{2}+\frac{N-1}{r}\phi_{2}=\frac{\tilde{c}_{2}^{2}}{r^{2}}+\frac{\mu}{r^{p}}+O\left(\frac{1}{r^{3}}\right),
\]
where
\[
\tilde{c}_{2}=\frac{F^{\prime\prime}(0)c_{1}^{2}}{2}-(N-2)c_{1}.
\]
We can then repeat the same process to obtain the expansion of $\phi_{2}$
and furthermore the expansion as stated in Theorem \ref{thm: precise estimate of GS at infinity}
to any polynomial order as we want.

Next we need to determine $c_{i}\:(i\ge0)$ in Theorem \ref{thm: precise estimate of GS at infinity}.
By (\ref{eq: the first expansion}), we already obtain the expansion
in the case when $1<p<2$. In general, let $k$ be the integer such
that $k\le p<k+1$. By the Taylor expansion of the function $F(t)$
at $t=0$, we obtain that
\begin{equation}
\phi_{1}'-\al_{0}\phi_{1}+\frac{N-1}{r}\phi_{1}=-\frac{N-1}{r}\phi_{\infty}+\frac{\mu}{r^{p}}+\sum_{n=2}^{k}\frac{F^{(n)}(0)}{n!}\phi_{1}^{n}(r)+O(\phi_{1}^{k+1})+O(u^{\de}).\label{eq: equation of phi 1-1}
\end{equation}
Let
\[
\phi_{1}=\sum_{i=1}^{k}\frac{c_{i}}{r^{i}}+\frac{d_{1}}{r^{p}}+O\left(\frac{1}{r^{k+1}}\right),
\]
Substituting $\phi_{1}$ into equation (\ref{eq: equation of phi 1-1}),
we get by comparing the coefficients of $r^{-l}$ $(l=1,2,\cdots,k)$
that
\[
c_{1}=\frac{(N-1)\phi_{\infty}}{\al_{0}},
\]
and that $\{c_{i}\}_{i=2}^{k}$ and $d_{1}$ are determined uniquely
by
\begin{eqnarray*}
(N-i)c_{i-1}-\al_{0}c_{i}=\sum_{n=2}^{i}\frac{F^{(n)}(0)}{n!}\sum_{\underset{{\scriptscriptstyle {\displaystyle {\scriptstyle j_{1},\cdots,j_{n}>0}}}}{{\scriptstyle j_{1}+\cdots+j_{n}=i}}}c_{j_{1}}c_{j_{2}}\cdots c_{j_{n}}, & \text{and} & d_{1}=-\frac{\mu}{\al_{0}}.
\end{eqnarray*}
The proof of Theorem \ref{thm: precise estimate of GS at infinity}
is complete.\end{proof}

Now we prove Theorem \ref{thm: Asym. Beha. of GS at infinity}.

\begin{proof}[Proof of Theorem \ref{thm: Asym. Beha. of GS at infinity}]
Let $c_{0},c_{1}$ be defined as in Theorem \ref{thm: precise estimate of GS at infinity}.
We have by (\ref{eq: u'/u expansion}), for $1<p<2$, that
\begin{eqnarray*}
\frac{u^{\prime}}{u} & = & -c_{0}^{\frac{1}{p-1}}\left(1+\frac{c_{1}}{(p-1)c_{0}}\frac{1}{r}+O\left(\frac{1}{r^{p}}\right)\right)\\
 & = & -\left(\frac{m}{p-1}\right)^{\frac{1}{p}}-\frac{N-1}{p(p-1)}\frac{1}{r}+O\left(\frac{1}{r^{p}}\right);
\end{eqnarray*}
and for $p\ge2$, that
\begin{eqnarray*}
\frac{u^{\prime}}{u} & = & -c_{0}^{\frac{1}{p-1}}\left(1+\frac{c_{1}}{(p-1)c_{0}}\frac{1}{r}+O\left(\frac{1}{r^{2}}\right)\right)\\
 & = & -\left(\frac{m}{p-1}\right)^{\frac{1}{p}}-\frac{N-1}{p(p-1)}\frac{1}{r}+O\left(\frac{1}{r^{2}}\right).
\end{eqnarray*}
It follows easily from the above equations that
\[
\lim_{|x|\to\wq}u(x)|x|^{\frac{N-1}{p(p-1)}}e^{\left(\frac{m}{p-1}\right)^{\frac{1}{p}}|x|}=C
\]
 for a constant $0<C<\wq$. The proof of Theorem \ref{thm: Asym. Beha. of GS at infinity}
is complete. \end{proof}

\section{Proof of Theorem \ref{thm: results for weak solutions at origin}}

In this section, we prove Theorem \ref{thm: results for weak solutions at origin}.
We need the following estimate.
\begin{prop}
\label{prop: poor estimate} Assume that $m>0$, $0\le\mu<\bar{\mu}=\left((N-p)/p\right)^{p}$
and that $f\in C(\R)$ satisfies (\ref{eq: condition of f at origin})
and (\ref{eq: condition of f at infiinity}). Let $u\in W^{1,p}(\mathbb{R}^{N})$
be a weak solution to equation (\ref{eq: Object}). Then there exists
a positive constant $c$ depending on $N,p,\mu,m,q,A$ and $u$ such
that
\begin{eqnarray*}
|u(x)|\leq c|x|^{-\frac{N-p}{p}+\tau_{0}} &  & \text{for }|x|<r_{0},
\end{eqnarray*}
where $\tau_{0}$ and $r_{0}$ are constants in $(0,1)$ depending
on $N,p,\mu,m,q,A$ and $u$.
\end{prop}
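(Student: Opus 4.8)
The plan is to establish the a priori bound $|u(x)|\le c|x|^{-(N-p)/p+\tau_0}$ by a Moser-type iteration adapted to the Hardy-perturbed operator, localized near the origin, in the spirit of the De Giorgi--Nash--Moser machinery but keeping careful track of the singular weight $\mu/|x|^p$. First I would recall that $u\in W^{1,p}(\R^N)$ is a weak solution of
\[
-\Delta_p u-\frac{\mu}{|x|^p}|u|^{p-2}u=g|u|^{p-2}u\quad\text{in }B_1(0),
\]
with $g(x)=-m+f(u(x))/(|u(x)|^{p-2}u(x))$, and that by (\ref{eq: condition of f at origin}), (\ref{eq: condition of f at infiinity}) one has $|g(x)|\le C(1+|u(x)|^{p^*-p})$, so $g\in L^{N/p}(B_1(0))$ since $u\in L^{p^*}(B_1(0))$ by Sobolev embedding. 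The key quantitative input is that the $L^{N/p}$-norm of $g$ over a small ball $B_\rho(0)$ can be made as small as we like by shrinking $\rho$, because $\|u\|_{p^*,B_\rho(0)}\to 0$ as $\rho\to 0$.

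The main steps I would carry out are: (i) Test the weak formulation with $\varphi=\eta^p |u_k|^{\beta-p}u$ for a truncation $u_k=\min(|u|,k)\operatorname{sgn}u$ (or $u_k=\max(u,0)$ cut off), $\beta>p$, and a cutoff $\eta$ supported in a small ball; absorb the Hardy term using the sharp Hardy inequality (\ref{eq: Hardy inequality}), which is legitimate precisely because $\mu<\bar\mu$, so that $\int |\nabla\varphi|^p-\mu\int |x|^{-p}|\varphi|^p$ controls a positive fraction of the Dirichlet energy. (ii) Handle the right-hand side term $\int g|u|^{p-2}u\,\varphi$ by Hölder with exponents $N/p$ and $(N/p)'=N/(N-p)$ together with Sobolev, estimating $\|g\|_{N/p,B_\rho}\le\varepsilon_0$; choosing $\rho$ small makes this term absorbable into the left-hand side. (iii) This yields a reverse-Hölder / Caccioppoli inequality $\||u|^{\beta/p}\|_{p^*,B_{\rho'}}\le C(\beta)\,(\rho-\rho')^{-1}\||u|^{\beta/p}\|_{p,B_\rho}$, and iterating over a sequence of shrinking radii with $\beta_j=(p^*/p)^j\,\beta_0$ gives the $L^\infty_{\mathrm{loc}}$ bound $\sup_{B_{\rho_0/2}}|u|\le C\|u\|_{p^*,B_{\rho_0}}$. (iv) To upgrade the bound from boundedness to the explicit decay rate $|x|^{-(N-p)/p+\tau_0}$, I would run the iteration on dyadic annuli $A_j=\{2^{-j-1}\le|x|<2^{-j}\}$, rescaling $u_j(y)=u(2^{-j}y)$ on a fixed annulus $\{1/2\le|y|<1\}$; the equation is invariant in form (the Hardy term scales correctly), the $L^{N/p}$-norm of the rescaled $g$ over the annulus tends to zero, and a decay estimate $\|u\|_{p^*,A_{j+1}}\le\theta\|u\|_{p^*,A_j\cup A_{j-1}}$ with $\theta<1$ (coming from the smallness of $\|g\|_{N/p}$) then yields geometric decay $\|u\|_{p^*,A_j}\le C\theta^j$, which translates via the local sup bound into $|u(x)|\le c|x|^{-(N-p)/p+\tau_0}$ with $\tau_0$ determined by $\theta$ and the scaling exponent $(N-p)/p$ (the natural exponent of $L^{p^*}\to L^\infty$ rescaling on annuli).

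The main obstacle I expect is controlling the Hardy term during the iteration: when testing with $\eta^p|u|^{\beta-p}u$, the term $\int \mu|x|^{-p}|u|^{\beta}\eta^p$ appears with a $\beta$-dependent sign issue, and one must verify that applying the Hardy inequality to $\eta|u|^{\beta/p}$ produces a constant $\mu/\bar\mu<1$ that does \emph{not} degenerate as $\beta\to\infty$ — this is exactly where the strict inequality $\mu<\bar\mu$ is used, and it forces the choice $\varphi\sim\eta^p|u|^{\beta-p}u$ rather than a naive power, plus an inequality of the type $|\nabla(\eta|u|^{\beta/p})|^p\le C(\beta)(\ldots)$ with constants that stay bounded in the relevant range. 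A secondary technical point is justifying the test functions by approximation, since $|u|^{\beta-p}u$ is not a priori bounded; this is resolved by first truncating ($u_k$), deriving the estimate with $k$-independent constants, and passing to the limit $k\to\infty$ by monotone/Fatou arguments. Once these two points are dealt with, the rescaling argument on annuli and the extraction of the exponent $\tau_0$ are routine.
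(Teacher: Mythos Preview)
Your overall architecture---rewrite the equation as $-\Delta_p u-\mu|x|^{-p}|u|^{p-2}u=g|u|^{p-2}u$ with $g\in L^{N/p}(B_1)$, exploit the smallness of $\|g\|_{N/p,B_\rho}$ for small $\rho$, and combine a Moser-type step with rescaling on dyadic annuli---is exactly the strategy behind Proposition~2.1 of \cite{CXY}, to which the paper defers. So the philosophy is correct.

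There is, however, a genuine gap in step~(iii) and in the paragraph on the ``main obstacle''. When you test with $\eta^p|u|^{\beta-p}u$, the gradient term produces $(\beta-p+1)\int\eta^p|u|^{\beta-p}|\nabla u|^p$, whereas applying Hardy's inequality to $w=\eta|u|^{\beta/p}$ compares the Hardy term $\mu\int|x|^{-p}w^p$ with $\int|\nabla w|^p\approx(\beta/p)^p\int\eta^p|u|^{\beta-p}|\nabla u|^p$. Absorption therefore requires
\[
\frac{(\beta-p+1)\,p^{\,p}}{\beta^{\,p}}\;>\;\frac{\mu}{\bar\mu}\,,
\]
and the left side tends to $0$ as $\beta\to\infty$ (since $p>1$). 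So the Hardy constant does not stay below the threshold uniformly in $\beta$; for $\beta$ beyond some $\beta_*(\mu,p)$ the step fails. This is not a mere technicality: for $\mu>0$ solutions genuinely blow up at the origin like $|x|^{-\gamma_1}$, so the bound $\sup_{B_{\rho_0/2}}|u|\le C\|u\|_{p^*,B_{\rho_0}}$ you claim in~(iii) is false in general.

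The fix, and what makes the argument in \cite{CXY} go through, is that you do \emph{not} need the full iteration. A \emph{single} step of the Caccioppoli/Sobolev estimate with $\beta$ slightly larger than $p$ (where the inequality above holds because it equals $1>\mu/\bar\mu$ at $\beta=p$) yields $u\in L^{s}(B_{\rho_0})$ for some $s>p^*$. This higher integrability is the real engine of the decay: on each annulus $A_j=\{2^{-j-1}\le|x|<2^{-j}\}$ the Hardy weight is bounded, so standard $p$-Laplacian regularity gives $\sup_{A_j}|u|\le C\,2^{jN/s}\|u\|_{s,B_{\rho_0}}$, and $N/s<N/p^*=(N-p)/p$ delivers the exponent $\tau_0=N/p^*-N/s>0$. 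Your decay mechanism in~(iv), ``$\|u\|_{p^*,A_{j+1}}\le\theta\|u\|_{p^*,A_j\cup A_{j-1}}$ from smallness of $\|g\|_{N/p}$'', is not how $\theta<1$ arises; it comes from the extra integrability, via H\"older on the annuli.
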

The same estimate was obtained in \cite[Proposition 2.1]{CXY} for
solutions to equation
\begin{eqnarray*}
-\De_{p}u-\frac{\mu}{|x|^{p}}|u|^{p-2}u=h(x)|u|^{p^{*}-2}u &  & \text{in }\R^{N},
\end{eqnarray*}
where $h$ is a bounded function. The proof of Proposition \ref{prop: poor estimate}
is the same as that of Proposition 2.1 of \cite{CXY}, with minor
modifications. We omit the details.

Now we prove Theorem \ref{thm: results for weak solutions at origin}.
For simplicity, we write $B_{r}=B_{r}(0)$ in this section.

\begin{proof}[Proof of Theorem \ref{thm: results for weak solutions at origin}]
Let $u\in W^{1,p}(\R^{N})$ be a solution to equation (\ref{eq: Object}).
We prove (\ref{eq: growth at origin of weak solutions}) of Theorem
\ref{thm: results for weak solutions at origin} by Theorem \ref{thm: Xiang Theorem 1.3}.
Set
\[
g(x)=-m+\frac{f(u(x))}{|u(x)|^{p-2}u(x)}.
\]
Then $u\in W^{1,p}(B_{1})$ is a weak solution to equation (\ref{eq: Xiang 1})
in $B_{1}$ with function $g$ defined as above. By (\ref{eq: condition of f at origin})
and (\ref{eq: condition of f at infiinity}), we have
\[
|g(x)|\le c(1+|u(x)|^{p^{*}-p}).
\]
Then by Proposition \ref{prop: poor estimate}, we have
\begin{eqnarray*}
|g(x)|\le c|x|^{-\al} &  & \text{for }|x|<r_{0},
\end{eqnarray*}
 where $\al=(p^{*}-p)(\frac{N-p}{p}-\tau_{0})<p$ and $\tau_{0},r_{0}$
are as in Proposition \ref{prop: poor estimate}. Thus Theorem \ref{thm: Xiang Theorem 1.3}
implies that
\begin{eqnarray*}
u(x)\le c_{1}|x|^{-\ga_{1}} &  & \text{for }|x|<r_{1},
\end{eqnarray*}
where $c_{1}$, $r_{1}$ are constants and $r_{1}\le r_{0}$. We can
also prove the above estimate for $-u$ similarly. Thus (\ref{eq: growth at origin of weak solutions})
is proved.

Next, we prove (\ref{eq: inverse estimate at the origin}). Suppose
that $u$ and $f(u)$ are nonnegative in $B_{\rho}$ for $\rho>0$.
Then $u$ is a nonnegative supersolution to equation
\begin{eqnarray}
-\De_{p}w-\frac{\mu}{|x|^{p}}|w|^{p-2}w=-m|w|^{p-2}w,\label{eq: 3.3.1}
\end{eqnarray}
in $B_{\rho}$. We will construct a weak subsolution $v\in W^{1,p}(B_{r_{2}})$
to equation (\ref{eq: 3.3.1}) in $B_{r_{2}}$ for some $r_{2}\le\rho$
such that $v\le u$ on $\pa B_{r_{2}}$ and $v\ge c_{2}\Big(\inf_{B_{r_{2}}}u\Big)|x|^{-\ga_{1}}$
in $B_{r_{2}}$. Then we obtain (\ref{eq: inverse estimate at the origin})
by applying Theorem \ref{thm: comparison principle of Xiang} to the
supersolution $u$ and the subsolution $v$ of equation (\ref{eq: 3.3.1})
in $B_{r_{2}}$.

In the rest of the proof, we construct such a subsolution $v$. We
follow \cite{CXY} and define $w_{0}(x)=|x|^{-\ga_{1}}(1+\de|x|^{\ep})$
for some constants $\de,\ep>0$ to be determined. Direct computation
shows that $w_{0}\in W^{1,p}(B_{1})$ solves the equation
\begin{eqnarray}
-\De_{p}w-\frac{\mu}{|x|^{p}}|w|^{p-2}w=\frac{h(-\delta|x|^{\epsilon})}{\left(1+\delta|x|^{\epsilon}\right)^{p-1}|x|^{p}}{|w|^{p-2}w} &  & \text{for }x\ne0,\label{eq: equation of sub 2}
\end{eqnarray}
where
\[
h(t)\equiv|\gamma_{1}-(\gamma_{1}-\epsilon)t|^{p-2}[k(\gamma_{1}-\epsilon)t-k(\gamma_{1})]-\mu|1-t|^{p-2}(1-t),\quad t\in\R,
\]
and $k(t)\equiv(p-1)t^{2}-(N-p)t\ensuremath{.}$ Set
\begin{equation}
\tilde{h}(x)=\frac{h(-\delta|x|^{\epsilon})}{\left(1+\delta|x|^{\epsilon}\right)^{p-1}|x|^{p}},\quad x\in\R^{N}.\label{eq: tilde of h}
\end{equation}
We want to choose appropriate $\de,\ep$ such that $\tilde{h}(x)\le-m$
for $|x|$ small enough.

Note that $\ensuremath{h(0)=-\gamma_{1}^{p-2}k(\gamma_{1})-\mu}$,
where $k(\gamma_{1})=(p-1)\gamma_{1}^{2}-(N-p)\gamma_{1}$. Thus by
the definition of $\gamma_{1}$, as in (\ref{eq: gamma 1 and gamma 2}),
we have $h(0)=0$. We also have
\[
h^{\prime}(0)=(p-1)\ga_{1}^{p-2}(-p\ga_{1}+N-p+\ep)\ep>0,
\]
since $\ga_{1}<(N-p)/p$. Therefore there exists $1>\delta_{h}>0$
such that
\begin{eqnarray}
2h'(0)t\le h(t)\le\frac{1}{2}h'(0)t &  & \text{for }-\de_{h}\le t<0.\label{eq: property of h}
\end{eqnarray}

Now we choose $\de=\de_{h}$ and $0<\ep<p$. Note that $1+\delta|x|^{\epsilon}\ge1$.
Hence by (\ref{eq: tilde of h}) and (\ref{eq: property of h}) we
have
\begin{eqnarray}
-2h'(0)\delta_{h}|x|^{\epsilon-p}\le\tilde{h}(x)\le-\frac{1}{2}h'(0)\delta_{h}|x|^{\epsilon-p} &  & \text{for }|x|<1.\label{eq: tilde of h -2}
\end{eqnarray}
Since $\ep>0$, (\ref{eq: tilde of h -2}) implies that $\tilde{h}\in L^{\frac{N}{p}}(B_{1})$.
Also it is clear that one can find a constant $r_{2}$, $0<r_{2}<\rho$,
such that
\begin{eqnarray*}
\tilde{h}(x)\le-\frac{1}{2}h'(0)\delta_{h}|x|^{\epsilon-p}\le-m &  & \text{for }|x|<r_{2}.
\end{eqnarray*}
Hence $w_{1}$ is a weak subsolution to equation (\ref{eq: 3.3.1})
in $B_{r_{2}}$.

For such $w_{0}$ and $r_{2}$, we define $v(x)=c^{\prime}lw_{0}(x)$
for $x\in B_{r_{2}}$, where $c^{\prime}=\inf_{\pa B_{r_{2}}}w_{0}^{-1}$
and $l=\inf_{B_{r_{2}}}u$. We can assume that $\inf_{B_{r_{2}}}u>0$.
Otherwise, (\ref{eq: inverse estimate at the origin}) is trivial
since we assume that $u\ge0$. Thus $v\in W^{1,p}(B_{r_{2}})$ is
a subsolution to equation (\ref{eq: 3.3.1}) in $B_{r_{2}}$ satisfying
$v\le u$ on $\pa B_{r_{2}}$ and $v\ge c_{2}\Big(\inf_{B_{r_{2}}}u\Big)|x|^{-\ga_{1}}$
in $B_{r_{2}}$. We finish the proof. \end{proof}

We remark here that the proof for Theorem \ref{thm: results for weak solutions at origin}
also works for all $m\in\R$.

\section{Proof of Theorem \ref{thm: results for weak solutions at infinity}}

In this section we prove Theorem \ref{thm: results for weak solutions at infinity}.
We need the following lemma. For simplicity, we write $B_{\rho}^{c}=B_{\rho}^{c}(0)$
in this section.
\begin{lem}
\label{lem: computational result} (i) Let $\alpha=\left((m-\ep)/(p-1)\right)^{1/p}$
for $m>\ep>0$. Then the function $w_{1}(x)=e^{-\al|x|}$ is a solution
to equation
\begin{eqnarray}
-L_{p,m-\ep}w\equiv-\Delta_{p}w+(m-\ep)|w|^{p-2}w=\frac{(N-1)\al^{p-1}}{|x|}|w|^{p-2}w &  & \text{in }\R^{N}.\label{eq: 4.2-1}
\end{eqnarray}

(ii) Let $\ga\in\R$, $0<\de<1/2$ and let
\begin{eqnarray*}
v_{\ga}(x)=|x|^{-\frac{N-1}{p(p-1)}}e^{-\left(\frac{m}{p-1}\right)^{\frac{1}{p}}|x|}\left(1-\ga|x|^{-\de}\right), &  & x\ne0.
\end{eqnarray*}
Then $v_{\ga}$ is a solution to equation
\begin{eqnarray}
-L_{p,m}v\equiv-\Delta_{p}v+m|v|^{p-2}v=Q(x)|v|^{p-2}v &  & \text{in }\R^{N},\label{eq: 4.2}
\end{eqnarray}
where $Q(x)$ satisfies
\begin{eqnarray}
Q(x)=\frac{Q_{0}}{|x|^{\de+1}}+O\left(\frac{1}{|x|^{2\de+1}}\right) &  & \text{as }|x|\to\wq,\label{eq: definition of Q}
\end{eqnarray}
with $Q_{0}=\left(\frac{m}{p-1}\right)^{\frac{p-1}{p}}p(p-1)\de\ga$.\end{lem}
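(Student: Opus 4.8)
\textbf{Proof proposal for Lemma \ref{lem: computational result}.}

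The plan is to verify both statements by direct computation, exploiting radial symmetry. For a radial function $w(x)=g(|x|)$ one has the identity
\[
-\Delta_p w = -\left(|g'|^{p-2}g'\right)' - \frac{N-1}{r}|g'|^{p-2}g',
\]
valid wherever $g$ is smooth and $g'$ does not vanish; writing $r=|x|$ I would reduce each claim to an ODE computation.

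For part (i), I would set $g(r)=e^{-\al r}$, so $g'=-\al e^{-\al r}<0$ and $|g'|^{p-2}g' = -\al^{p-1}e^{-(p-1)\al r}$. Differentiating gives $\left(|g'|^{p-2}g'\right)' = (p-1)\al^{p}e^{-(p-1)\al r}$, hence
\[
-\Delta_p w = -(p-1)\al^{p}e^{-(p-1)\al r} + \frac{N-1}{r}\al^{p-1}e^{-(p-1)\al r}.
\]
Since $g^{p-1}=e^{-(p-1)\al r}$ and, by the choice $\al^{p}=(m-\ep)/(p-1)$, we have $(p-1)\al^{p}=m-\ep$, this becomes $-\Delta_p w = -(m-\ep)w^{p-1} + \frac{(N-1)\al^{p-1}}{r}w^{p-1}$, which is exactly equation (\ref{eq: 4.2-1}). (Here $w>0$, so $|w|^{p-2}w=w^{p-1}$.)

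For part (ii), the computation is the same in spirit but bookkeeping-heavy. I would write $v_\ga(r)=r^{-\beta}e^{-c_0^{1/(p-1)}r}\bigl(1-\ga r^{-\de}\bigr)$ with $\beta=\tfrac{N-1}{p(p-1)}$ and $c_0^{1/(p-1)}=(m/(p-1))^{1/p}$, compute $v_\ga'/v_\ga$ as an asymptotic series in $1/r$ (the leading terms are $-(m/(p-1))^{1/p} - \beta/r + (\text{terms of order } r^{-1-\de})$), then form $|v_\ga'|^{p-2}v_\ga'$ and differentiate. Collecting terms, the leading-order contributions reproduce $-L_{p,m}v_\ga$ with a vanishing coefficient (because $\beta$ and the exponential rate were chosen precisely as the first two terms in the expansion of Theorem \ref{thm: precise estimate of GS at infinity} with $\mu=0$), so the first nonzero remainder is the $r^{-\de-1}$ term; isolating its coefficient yields $Q_0=(m/(p-1))^{(p-1)/p}p(p-1)\de\ga$, and all further terms are $O(r^{-2\de-1})$ since $2\de<1$ keeps this below the competing $O(r^{-2})$ and $O(r^{-1-2\de})$ corrections. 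Defining $Q(x)$ by $Q(x)|v_\ga|^{p-2}v_\ga = -L_{p,m}v_\ga$ then gives (\ref{eq: definition of Q}).

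The routine but genuinely delicate step is the expansion in part (ii): one must track the interaction between the polynomial prefactor $r^{-\beta}$, the exponential, and the perturbation $1-\ga r^{-\de}$ through the nonlinearity $t\mapsto|t|^{p-2}t$, and confirm that the $O(1)$ and $O(1/r)$ pieces cancel exactly so that the stated $Q_0/r^{\de+1}$ is indeed the leading surviving term. I would organize this by computing $\log v_\ga$, differentiating to get $v_\ga'/v_\ga = -(m/(p-1))^{1/p} - \beta/r - \ga\de r^{-\de-1} + O(r^{-2}) + O(r^{-2\de-1})$, raising to the $(p-1)$ power via the binomial expansion, multiplying by $r^{N-1}$, differentiating once more, and dividing by $r^{N-1}v_\ga^{p-1}$; the cancellation of the leading terms is then forced by the same algebra that produced $c_0,c_1$ in Theorem \ref{thm: precise estimate of GS at infinity}. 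Everything else is elementary.
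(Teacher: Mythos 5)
Your proof takes the same route as the paper: part (i) is the identical direct computation for the radial $p$-Laplacian of $e^{-\alpha r}$, and part (ii) follows the same plan of writing $v_\gamma'=-A(r)v_\gamma$, forming $|v_\gamma'|^{p-2}v_\gamma'=-A^{p-1}v_\gamma^{p-1}$, differentiating, and expanding $A^{p-1}$, $A^p$ asymptotically so that the $O(1)$ and $O(1/r)$ coefficients cancel by the choice of the prefactor exponent $\tfrac{N-1}{p(p-1)}$ and the rate $(m/(p-1))^{1/p}$, leaving the $r^{-\delta-1}$ term as the leading contribution to $Q$. One caution before you write this out in full: your displayed expansion $v_\gamma'/v_\gamma=-(m/(p-1))^{1/p}-\beta/r-\gamma\delta r^{-\delta-1}+\cdots$ has the wrong sign on the $r^{-\delta-1}$ term, since $\tfrac{d}{dr}\log(1-\gamma r^{-\delta})=\gamma\delta r^{-\delta-1}/(1-\gamma r^{-\delta})$ is $+\gamma\delta r^{-\delta-1}+O(r^{-2\delta-1})$; carried through literally that slip would reverse the sign of $Q_0$, even though the value of $Q_0$ you ultimately report is the correct one.
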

\begin{proof}
We prove Lemma \ref{lem: computational result} by direct computation.
First, we prove (i). Let $w_{1}=e^{-\al|x|}$. Then
\[
-L_{p,m-\ep}w_{1}=-\left(|w_{1}^{\prime}(r)|^{p-2}w_{1}^{\prime}(r)\right)^{\prime}-\frac{N-1}{r}|w_{1}^{\prime}(r)|^{p-2}w_{1}^{\prime}(r)+(m-\ep)w_{1}^{p-1}(r)
\]
for $r=|x|$. Since $w_{1}^{\prime}=-\al w_{1}$, we have that
\begin{eqnarray*}
 &  & |w_{1}^{\prime}(r)|^{p-2}w_{1}^{\prime}(r)=-\al^{p-1}w_{1}^{p-1}(r),\\
 &  & \left(|w_{1}^{\prime}(r)|^{p-2}w_{1}^{\prime}(r)\right)^{\prime}=(m-\ep)w_{1}^{p-1}(r).
\end{eqnarray*}
Hence
\[
-L_{p,m-\ep}w_{1}=\frac{(N-1)\al^{p-1}}{r}w_{1}^{p-1}(r).
\]
This proves (i).

Next, we prove (ii). Write $\al=\frac{N-1}{p(p-1)}$, $\be=\left(\frac{m}{p-1}\right)^{\frac{1}{p}}$
and set
\[
v_{\ga}(r)=e^{-\be r}r^{-\al}(1-\ga r^{-\de})
\]
for $r=|x|$. Then $v_{\ga}(r)>0$ for $r$ large enough and
\[
-L_{p,m}v_{\ga}=-\left(|v_{\ga}^{\prime}(r)|^{p-2}v_{\ga}^{\prime}(r)\right)^{\prime}-\frac{N-1}{r}|v_{\ga}^{\prime}(r)|^{p-2}v_{\ga}^{\prime}(r)+mv_{\ga}^{p-1}(r).
\]
We have $v_{\ga}^{\prime}(r)=-A(r)v_{\ga}$, where
\[
A(r)=\be+\frac{\al}{r}+\frac{\de\ga r^{-\de-1}}{1-\ga r^{-\de}}.
\]
Note that $A(r)>0$ for $r$ large enough. We also have that
\[
\begin{aligned} & |v_{\ga}^{\prime}(r)|^{p-2}v_{\ga}^{\prime}(r)=-A^{p-1}(r)v_{\ga}^{p-1}(r),\\
 & \left(|v_{\ga}^{\prime}(r)|^{p-2}v_{\ga}^{\prime}(r)\right)^{\prime}=-\left((A^{p-1}(r))^{\prime}-(p-1)A^{p}(r)\right)v_{\ga}^{p-1}(r).
\end{aligned}
\]
Hence
\[
-L_{p,m}v_{\ga}=\left(m+(A^{p-1}(r))^{\prime}-(p-1)A^{p}(r)+\frac{N-1}{r}A^{p-1}(r)\right)v_{\ga}^{p-1}(r).
\]
Thus (\ref{eq: 4.2}) is proved by setting
\[
Q(x)=Q(r)=m+(A^{p-1}(r))^{\prime}-(p-1)A^{p}(r)+\frac{N-1}{r}A^{p-1}(r)
\]
for $r=|x|$. We need to show that $Q$ satisfies (\ref{eq: definition of Q}).
To this end, we have
\begin{eqnarray*}
 &  & A(r)=\be+\frac{\al}{r}+\frac{\de\ga}{r^{\de+1}}+O\left(\frac{1}{r^{2\de+1}}\right),\\
 &  & A^{p-1}(r)=\be^{p-1}\left(1+\frac{(p-1)\al}{\be r}+\frac{(p-1)\de\ga}{\be r^{\de+1}}+O\left(\frac{1}{r^{2\de+1}}\right)\right),\\
 &  & A^{p}(r)=\be^{p}\left(1+\frac{p\al}{\be r}+\frac{p\de\ga}{\be r^{\de+1}}+O\left(\frac{1}{r^{2\de+1}}\right)\right),\\
 &  & \left(A^{p-1}(r)\right)^{\prime}=-\frac{\be^{p-2}(p-1)\al}{r^{2}}+O\left(\frac{1}{r^{\de+2}}\right)=O\left(\frac{1}{r^{2}}\right),
\end{eqnarray*}
as $r\to\wq$. Then (\ref{eq: definition of Q}) follows easily. The
proof of (ii) is complete.
\end{proof}
Now we prove Theorem \ref{thm: results for weak solutions at infinity}.

\begin{proof}[Proof of  Theorem \ref{thm: results for weak solutions at infinity}]
Let $u\in W^{1,p}(\R^{N})$ be a solution to equation (\ref{eq: Object}).
We claim that there exist $\al>0$ and $C>0$ such that for $\rho$
large enough we have
\begin{eqnarray}
|u(x)|\le Ce^{-\al|x|} &  & \text{for }|x|\ge\rho.\label{eq: 4.7.2}
\end{eqnarray}
To prove (\ref{eq: 4.7.2}), we can follow the argument of \cite[Theorem 1.1]{LiGB2983}
to prove that $u\in C^{1}(\R^{N}\backslash\{0\})$ and $u(x)\to0$
as $|x|\to\infty$. And then by (\ref{eq: condition of f at origin}),
we obtain that
\begin{eqnarray}
\frac{|f(u(x))|}{|u(x)|^{p-1}}\le C|u(x)|^{q-p}\to0 &  & \text{as }|x|\to\infty.\label{eq: 4.4.1}
\end{eqnarray}
Fix $\ep>0$ such that $0<\ep<m$. We can choose $\rho_{0}$ large
enough such that $|f(u(x))|/|u(x)|^{p-1}\le\ep$ for $|x|\ge\rho_{0}$.
Then $u$ is a subsolution to equation
\begin{equation}
-\Delta_{p}w+(m-\ep)|w|^{p-2}w=\frac{\mu}{|x|^{p}}|w|^{p-2}w\label{eq: 4.1.1}
\end{equation}
$\text{in }B_{\rho_{0}}^{c}$.

Let $\alpha=\left((m-\ep)/(p-1)\right)^{1/p}$ and set $w_{1}=e^{-\al|x|}$.
Then by Lemma \ref{lem: computational result} (i) $w_{1}$ is a solution
to equation (\ref{eq: 4.2-1}). We can choose $\rho\ge\rho_{0}$ large
enough such that
\begin{eqnarray*}
\frac{(N-1)\al^{p-1}}{|x|}\ge\frac{\mu}{|x|^{p}} &  & \text{for }|x|\ge\rho.
\end{eqnarray*}
Meanwhile, we can also choose $\rho\ge\rho_{0}$ large enough for
later use such that
\begin{eqnarray*}
m-\ep-\frac{\mu}{|x|^{p}}>0 &  & \text{for }|x|\ge\rho.
\end{eqnarray*}

Then $w_{1}$ is a supersolution to equation (\ref{eq: 4.1.1}) in
$B_{\rho}^{c}$. Now define $\tilde{w}_{1}(x)=CMw_{1}(x)$, where
$C=e^{\al\rho}$ and $M=\sup_{\pa B_{\rho}}u^{+}$. Then $\tilde{w}_{1}$
is also a supersolution to equation (\ref{eq: 4.1.1}) in $B_{\rho}^{c}$
and $u\le\tilde{w}_{1}$ on $\pa B_{\rho}$.

Let $(u-\tilde{w}_{1})^{+}=\max(u-\tilde{w}_{1},0)$. Since $u$ is
a subsolution to equation (\ref{eq: 4.1.1}) in $B_{\rho}^{c}$ and
$\tilde{w}_{1}$ is a supersolution to equation (\ref{eq: 4.1.1})
in $B_{\rho}^{c}$ respectively, we have that
\begin{equation}
\int_{B_{\rho}^{c}}|\na u|^{p-2}\na u\cdot\na(u-\tilde{w}_{1})^{+}+\int_{B_{\rho}^{c}}\left(m-\ep-\frac{\mu}{|x|^{p}}\right)|u|^{p-2}u(u-\tilde{w}_{1})^{+}\le0,\label{eq: 4.7}
\end{equation}
and that
\begin{equation}
\int_{B_{\rho}^{c}}|\na\tilde{w}_{1}|^{p-2}\na\tilde{w}_{1}\cdot\na(u-\tilde{w}_{1})^{+}+\int_{B_{\rho}^{c}}\left(m-\ep-\frac{\mu}{|x|^{p}}\right)|\tilde{w}_{1}|^{p-2}\tilde{w}_{1}(u-\tilde{w}_{1})^{+}\ge0.\label{eq: 4.8}
\end{equation}
Then combining (\ref{eq: 4.7}) and (\ref{eq: 4.8}) yields
\begin{equation}
\begin{aligned} & \int_{B_{\rho}^{c}}\langle|\nabla u|^{p-2}\na u-|\na\tilde{w}_{1}|^{p-2}\na\tilde{w}_{1},\na(u-\tilde{w}_{1})^{+}\rangle\\
+ & \int_{B_{\rho}^{c}}\left(m-\ep-\frac{\mu}{|x|^{p}}\right)\left(|u|^{p-2}u-|\tilde{w}_{1}|^{p-2}\tilde{w}_{1}\right)(u-\tilde{w}_{1})^{+}\le0.
\end{aligned}
\label{eq: 4.9}
\end{equation}
 Then it follows easily from (\ref{eq: 4.9}) that $u\le\tilde{w}_{1}$
in $B_{\rho}^{c}$. We can prove similarly that $-u\le\tilde{w}_{1}$
in $B_{\rho}^{c}$. This proves (\ref{eq: 4.7.2}).

Now we prove (\ref{eq: growth at infinity of weak solutions}). We
only prove that
\begin{eqnarray}
u(x)\le C_{1}|x|^{-\frac{N-1}{p(p-1)}}e^{-(\frac{m}{p-1})^{\frac{1}{p}}|x|} &  & \text{for }|x|>R_{1},\label{eq: half estimate}
\end{eqnarray}
where $R_{1}$ is a constant large enough. We can prove similarly
the same estimate for $-u$.

Let
\[
c(x)=\frac{\mu}{|x|^{p}}+\frac{f(u(x))}{|u(x)|^{p-2}u(x)}.
\]
Then $u$ satisfies that
\[
-\De_{p}u+m|u|^{p-2}u=c(x)|u|^{p-2}u
\]
in $\R^{N}$. By (\ref{eq: 4.7.2}) and (\ref{eq: 4.4.1}), we have
that
\begin{eqnarray*}
|c(x)|\le\frac{2\mu}{|x|^{p}}<m &  & \text{for }|x|\ge\rho_{1}
\end{eqnarray*}
where $\rho_{1}$ is a constant large enough. Thus $u$ is a subsolution
to equation
\begin{equation}
-\De_{p}w+m|w|^{p-2}w=\frac{2\mu}{|x|^{p}}|w|^{p-2}w,\label{eq: subsolution first}
\end{equation}
$\text{in }B_{\rho_{1}}^{c}$.

Let
\begin{eqnarray*}
v_{1}(x)=|x|^{-\frac{N-1}{p(p-1)}}e^{-\left(\frac{m}{p-1}\right)^{\frac{1}{p}}|x|}\left(1-|x|^{-\de}\right), &  & x\ne0,
\end{eqnarray*}
where $0<\de<\min(p-1,1/2)$. By Lemma \ref{lem: computational result}
(ii), $v_{1}$ is a solution to equation (\ref{eq: 4.2}) with
\begin{eqnarray*}
Q(x)=\frac{Q_{0}}{|x|^{\de+1}}+O\left(\frac{1}{|x|^{2\de+1}}\right) &  & \text{as }|x|\to\wq,
\end{eqnarray*}
where
\[
Q_{0}=\left(\frac{m}{p-1}\right)^{\frac{p-1}{p}}p(p-1)\de>0.
\]
Since $\de<p-1$, we have that
\begin{eqnarray*}
Q(x)\ge\frac{2\mu}{|x|^{p}} &  & \text{for }|x|\ge R_{1},
\end{eqnarray*}
where $R_{1}$, $R_{1}\ge\rho_{1}$, is a constant. Hence $v_{1}$
is a supersolution to equation (\ref{eq: subsolution first}) in $B_{R_{1}}^{c}$.
Now define $\tilde{v}_{1}(x)=CMv_{1}(x),$ where $C=\sup_{\pa B_{R_{1}}}v_{1}^{-1}$
and $M=\sup_{\pa B_{R_{1}}}u^{+}$. Then $\tilde{v}_{1}$ is also
a supersolution to equation (\ref{eq: subsolution first}) in $B_{R_{1}}^{c}$
and $u\le\tilde{v}_{1}$ on $\pa B_{R_{1}}$. By the same argument
as above, we can easily obtain that
\begin{eqnarray*}
u(x)\le\tilde{v}_{1}(x) &  & \text{for }|x|\ge R_{1}.
\end{eqnarray*}
This proves (\ref{eq: half estimate}). Similarly we can prove the
same estimate for $-u$. So (\ref{eq: growth at infinity of weak solutions})
is proved.

We prove (\ref{eq: inverse esimate on growth at the infinity --1})
similarly. Suppose that both $u$ and $f(u)$ are nonnegative in $B_{\rho}^{c}$
for $\rho>1$. Then $u$ is a nonnegative supersolution of equation
\begin{equation}
-\De_{p}w+m|w|^{p-2}w=0\label{eq: 4. last}
\end{equation}
$\text{in }B_{\rho}^{c}$.

Let
\begin{eqnarray*}
v_{-1}(x)=|x|^{-\frac{N-1}{p(p-1)}}e^{-\left(\frac{m}{p-1}\right)^{\frac{1}{p}}|x|}\left(1+|x|^{-\de}\right), &  & x\ne0,
\end{eqnarray*}
where $0<\de<1/2$. By Lemma \ref{lem: computational result} (ii),
$v_{-1}$ is a solution to equation (\ref{eq: 4.2}) with
\begin{eqnarray*}
Q(x)=\frac{Q_{0}}{|x|^{\de+1}}+O\left(\frac{1}{|x|^{2\de+1}}\right) &  & \text{as }|x|\to\wq,
\end{eqnarray*}
where
\begin{equation}
Q_{0}=-\left(\frac{m}{p-1}\right)^{\frac{p-1}{p}}p(p-1)\de<0.\label{eq: Q_0<0}
\end{equation}
It follows from (\ref{eq: Q_0<0}) that
\begin{eqnarray*}
Q(x)\le0 &  & \text{for }|x|>R_{2},
\end{eqnarray*}
where $R_{2}$, $R_{2}>\rho$, is a large constant. Hence $v_{-1}$
is a subsolution to equation (\ref{eq: 4. last}) in $B_{R_{2}}^{c}$.
Now define $\tilde{v}_{-1}=C_{2}l^{\prime}v_{-1}$, where $C_{2}=\inf_{\pa B_{R_{2}}}v_{2}^{-1}$
and $l^{\prime}=\inf_{\pa B_{R_{2}}}u$. Then $\tilde{v}_{-1}$ is
also a subsolution to equation (\ref{eq: 4. last}) in $B_{R_{2}}^{c}$
and $\tilde{v}_{-1}\le u$ on $\pa B_{R_{2}}$. By the same argument
as above, we can easily obtain that
\begin{eqnarray*}
\tilde{v}_{-1}\le u &  & \text{in }B_{R_{2}}^{c}.
\end{eqnarray*}
This proves (\ref{eq: inverse esimate on growth at the infinity --1}).
The proof of Theorem \ref{thm: results for weak solutions at infinity}
is complete.\end{proof}

\emph{Acknowledgments. }The authors would like to thank Xiao Zhong
for many useful discussions. Some of this research took place during
a six-months stay, by the first author at the University of Jyv\"askyl\"a.
He would like to thank the institute for the gracious hospitality
during this time. The second author is financially supported by the
Academy of Finland, project 259224.

\end{document}